\newtheorem{theorem}{Theorem}[section]
\newtheorem{lemma}[theorem]{Lemma}
\newtheorem{corollary}[theorem]{Corollary}
\theoremstyle{definition}
\newtheorem{example}[theorem]{Example}
\theoremstyle{remark}
\newtheorem{remark}[theorem]{Remark}
\newcommand{\slind}[1]{\index{#1}{\bf #1}}
\newcommand{\gloss}[1]{#1\glossary{\protect #1}}
\newcommand{\salmacite}[1]{\cite{#1}}
\newcommand{\n}{\par\noindent}
\newcommand{\sn}{\par\smallskip\noindent}
\newcommand{\mn}{\par\medskip\noindent}
\newcommand{\bn}{\par\bigskip\noindent}
\newcommand{\ra}{\rightarrow}
\newcommand{\bbox}[1]{\makebox(0,0){\rule[-2ex]{0ex}{5.4ex}#1}}
\newcommand{\cal}{\mathcal}
\newcommand{\lv}{\mathbb}
\newcommand{\N}{\lv N}
\newcommand{\Q}{\lv Q}
\newcommand{\R}{\lv R}
\newcommand{\Z}{\lv Z}
\begin{document}            

\title{The valuation difference rank of a quasi-ordered difference field}
\subjclass{ Primary 03C60, 06A05, 12J15: Secondary 12L12, 26A12}
\author{Salma Kuhlmann}
\address{Universit\"at Konstanz\\ FB Mathematik und Statistik\\
78457 Konstanz, Germany}
\email{salma.kuhlmann@uni-konstanz.de}
\author{Micka\"{e}l Matusinski}
\address{IMB, Universit\'e Bordeaux 1, 33405 Talence, France}
\email{mmatusin@math.u-bordeaux1.fr}
\author{Fran\c coise Point}
\address{Institut de math{\'e}matique, Le Pentagone\\
Universit{\'e} de Mons\\
B-7000 Mons, Belgium}
\email{Francoise.Point@umons.ac.be}
\thanks{Supported by a
 Research in Paris grant from Institut Henri Poincar\'e,  Konstanz University, Bordeaux 1 University and the Fonds de la Recherche Scientifique FNRS-FRS}
\maketitle



\begin{abstract}
There are several equivalent characterizations of the valuation rank
of an ordered or valued field. In this
paper, we extend the theory to the case of an ordered or valued {\it difference}
field (that is, ordered or valued field endowed with a compatible field automorphism). We introduce the notion of {\it difference rank}. To treat simultaneously the cases of ordered and valued fields, we consider quasi-ordered fields. We
characterize the difference rank as the quotient modulo the
equivalence relation naturally induced by the automorphism (which
encodes its growth rate). In analogy to the theory of convex
valuations, we prove that any linearly ordered set can be realized
as the difference rank of a maximally valued quasi-ordered difference field. As an application, we show that for every regular uncountable cardinal  $\kappa$ such that  $\kappa= \kappa^{< \kappa}$, there are $2^{\kappa}$ pairwise non-isomorphic quasi-ordered difference fields of cardinality $\kappa$, but all isomorphic as quasi-ordered fields.
\end{abstract}
\section{Introduction}
The theory of convex valuations and coarsenings of valuations is a special chapter
in classical valuation theory. It is a basic tool in algebraic and real algebraic geometry.
Surveys can be found in \salmacite{[LAM2]}, \salmacite{[LAN]} and
\salmacite{[PC]}. This special chapter is in turn closely related to ordered algebraic structures, see \cite{[Fu]}. In particular, an important isomorphism invariant of an ordered or valued
field is its rank as a valued field, which has several equivalent characterizations: via the ideals of the valuation ring, the value group, or the residue field, see \cite{[ZS]}.
\mn
This can be extended to ordered and valued fields with extra structure, giving a characterization completely analogous to the above, but taking into account the corresponding induced structure on the ideals, value group, or residue field. For example, in
\cite[Chapter 3]{[K]} the notion of the exponential rank of an ordered exponential field is introduced and analysed in light of the above classical tools. The exponential  rank measures the growth rate of the given exponential function, and is thus closely related to asymptotic analysis in the sense of G. H. Hardy \cite{[H]}.
\mn
In this paper, we push this analogy to the case of an ordered or valued
difference field. We work with quasi-ordered fields, see \cite{[F]}. In Section \ref{sectprelconv} we review classical notions and results on ordered or valued fields. We thereby present a uniform approach via quasi-orders, treating simultaneously the cases of ordered and valued fields.  Theorem \ref{wconv} gives a characterization of the rank of a quasi-ordered field in terms of coarsenings of its natural valuation. Descending down to the value group of the quasi-ordered field, and yet further down to the value set $\Gamma$ of the value group,  the rank and principal rank are finally characterized by the chain $\Gamma$, see  Theorems \ref{theorem1OSMT} and \ref{theorem2OSMT}. In Section \ref{partIIOSMT} we
start by a key remark regarding equivalence relations defined by
monotone maps on chains.  We describe in Theorem \ref{theorem3OSMT}  the rank of a quasi-ordered field via the equivalence relations induced by addition and
multiplication on the field. This approach allows us to develop in
Section \ref{diffanal} the notion of difference compatible
valuations and introduce the difference rank. We characterize in Theorem \ref{wconvsigma} the difference rank, in analogy to Theorem  \ref{wconv}. \cite[Lemma 1]{[S]} is a special case of our Corollary \ref{weakiso} on weak isometries. Corollary \ref{intersection} describes the set of fixed points of an automorphism $\sigma$ in terms of its difference rank, whereas Corollary \ref{omega} examines the special case of $\omega$-increasing or $\omega$-contracting automorphisms.
In the last Section
\ref{principalsigmarank} we describe the principal difference rank, see Theorem \ref{theorem3OSMTsigma} and its
Corollaries \ref{theorem1OSMTsigma}, \ref{theorem2OSMTsigma} and
\ref{arbitrary}. In Theorem \ref{last} we construct large families of quasi-ordered difference fields with distinct difference ranks.
\mn
Some closing comments are in place. The theory of well-quasi orders \cite{[Kru]} is currently a highly developed part of combinatorics with surprising applications in logic, mathematics and computer science. Quasi-ordered algebraic structures are interesting in their own right, and we will continue our investigations of these fascinating objects.  Quasi-orders  \cite{[Bir]} appear in the literature also as {\it preorders}, see e.g. \cite[p.1]{[Fu]}. However we will not use this terminology, in order to avoid confusion with the notion of preorders appearing in real algebraic geometry (e.g. in \cite{[Kr]}). The theory of quasi-ordered abelian groups is closely related to that of C-groups \cite{[H-M]} and has already found interesting applications in \cite{[L]} to the study of the asymptotic couple associated to a valued differential field.
Throughout the paper, Hahn groups and Hahn fields play a fundamental role. The group of automorphisms of Hahn structures have been extensively studied, see  \cite{[B]}, \cite{[DG]},  \cite{[Ho]} and  \cite{[S]}. In future work, we will analyse the behaviour of the difference rank as function defined on these automorphism groups.
\section{ The rank of a quasi-ordered field} \label{sectprelconv}

\mn A {\bf quasi-order  (q.o.)} on a set $S$ is a binary relation $\preceq$ which is reflexive and transitive.
Throughout this paper, we will deal only with {\bf total quasi-order}, i.e.  either $a\preceq b$ or $b\preceq a$, for any $a,\;b\in S$.
We will omit henceforth `total'. Note that an order is a q.o which is in addition anti-symmetric. In the latter case, we say that $S$ is an ordered set or a {\bf chain}.
The {\bf  induced  equivalence relation} is defined by  $a\asymp b$ if and only if ($a\preceq b$ and $b\preceq a$). We shall write $a \prec b $ if  $a\preceq b$ but $b\asymp a$ fails.  Note that $\preceq$ induces canonically a total order on $S/\asymp$. Conversely if $\asymp$ is an equivalence relation on a set $S$  such that  $S/\asymp$ is a total order,  then $\asymp$ induces canonically a q.o. on $S$.
A subset $E$ of $S$ is {\bf $\preceq$-convex} if for all $a, b, c$ in $S$, if $a\preceq c\preceq b$ and $a,\;b\in E$, then $c\in E$. We shall write convex instead of  $\preceq$-convex if the context is clear.
\mn  A {\bf quasi-ordered field} $(K,\preceq)$ is a field $K$ endowed with a quasi-order $\preceq$ which satisfies the  following compatibility conditions, for any $a, b, c \in K$.
\begin{description}
\item[qo1] If $a\asymp 0$,  then  $a=0$.
\item[qo2] If $0\preceq c$ and $a\preceq b$, then  $ac\preceq bc$.
\item[qo3] If $a\preceq b$ and $b\not\asymp c$ , then $a+c\preceq b+c$.
\end{description}
From {\bf qo2} one deduces that if $a\preceq b$ and $0\preceq c\preceq d$, then $ac\preceq bc\preceq bd$, so $ac\preceq bd$.\mn
 Given a valuation $w$ on $K$  we denote the {\bf valuation ring} by $K_w\,$, its {\bf group of units}  $K_w^{\times}$ by  $\cal{U}_w$,
its  {\bf valuation ideal} (i.e. its unique maximal ideal) by $I_w\,$, its {\bf value group} by $w(K^{\times})$ and {\bf residue field} $K_w/I_w$ by $Kw \>.$
\sn
An ordered field $(K, \leq)$ is a q.o. field. The valuation on a valued field $(K,w)$  induces a quasi-order: $a\preceq_{w} b$ if
and only if $w(b)\leq w(a)$, i.e. if and only if $ab^{-1}\in K_w$.
S. Fakhruddin \salmacite{[F]}  showed that if  $\preceq$  is  a q.o. on a field  $K$, then $\preceq$   is either an order or there is a (unique up to equivalence of valuations) valuation $v$  on $K$ such that
$\preceq\> = \preceq_{v}$. The dichotomy is achieved by considering the equivalence class $E_1$  of $1$ with respect to $\asymp \>$.  In the order case,  $E_1 = \{1\}$ and $\asymp \>$ is just equality.  The quasi-order is said to be a  {\bf proper quasi-order (p.q.o.)} if  $E_1 \not= \{1\}$. In this case, $E_1 \not= \{1\}$  is a non-trivial subgroup of $K^{\times}$ and $K^{\times}/E_1$ is an ordered abelian group.  Then  $\cal{U}_v$ is just $E_1$ and $v(K^{\times})\>$  is  $K^{\times}/E_1$.
In the p.q.o case  $a\succeq  0$ for all $a\in K\>$.
 \mn
Given two valuations $v$ and $w$ on $K$, recall that $w$  is said to be a \slind{coarsening} of $v$ ($w$ is coarser than $v$) or that $v$ a \slind{refinement} of $w$ ($v$ is finer than $w$) if  $K_v \subseteq K_w$. In case the inclusion of the valuation rings is strict, we add the predicate strict in the terminology coarser and finer. Note that $w$ is coarser than $v$ if and only if
$a\preceq_{v} b$ implies $a\preceq_{w} b\>$. If $\sim _1$ and $\sim _2$ are two equivalence relations
defined on the same set,  then $\sim _1$ is said to be {\bf coarser}
than $\sim _2$ (or $\sim_{2}$ {\bf finer} than $\sim_{1}$) if $\sim
_2$-equivalence implies $\sim _1$-equivalence.

\sn
Let us now fix a q.o. $\preceq$ on $K$ .  A valuation $w$ on $K$ is called \slind{convex} with respect to $\preceq$  if its valuation ring $K_w$  is convex.
It is called \slind{compatible} with $\preceq$  (or  $\preceq$ is compatible with $w$ or $w$ and  $\preceq$  are compatible)  if
 for all  $a,b\in K\>:$ $$ 0\preceq b \preceq a \;\;\;\Longrightarrow\;\;\;w(a) \leq w(b)\>.$$ Equivalently,  $w$ is compatible with $\preceq$
 if and only if for all $a,b\in K\>:$  $$0\preceq b \preceq a \;\;\;\Longrightarrow\;\;\; b \preceq_w a\>.$$
\begin{remark} \label{OSRGA1}
\sn

(i) If  $\preceq$  is an order, then this is the usual notion of compatibility for orders and valuations, see e.g. \salmacite{[LAM1]}, \salmacite{[LAM2]},  \salmacite{[PC]}, or
\salmacite{[PR1]}.
\sn

(ii) If  $\preceq =  \preceq _v$ is a p.q.o. then $w$ compatible with  $\preceq_v$  just means that for all $a,b\in K\>, v(a) \leq v(b) \;\;\;\Longrightarrow\;\;\;w(a) \leq w(b)\>.$
 This in turn just means that $K_v \subseteq K_w$  or $w$ is a coarsening of $v$, equivalently $\asymp_{w}$ is coarser than $\asymp_{v}$.
\end{remark}
\mn
The following gives the characterization of valuations  compatible with a quasi-order.  Theorem \ref{wconv} is  in complete analogy to the characterization of valuations  compatible with an order.  So for $\preceq$ an order, we omit the proof and  refer the reader to
\cite [Proposition~5.1]{ [LAM1]} , or \cite[Theorem~2.3 and Proposition~2.9]{ [LAM2]} , or \cite[Lemma~3.2.1]{ [PC]}, or \cite[ Lemma~7.2]{ [PR1]}  or \cite[ Proposition 2.2.4]{ [EP]}.
\begin{theorem}                                        \label{wconv}
Let $(K, \preceq)$  be a q.o. field and $w$ a valuation on $K$. The following assertions are equivalent:\sn 1)\ \ $w$ is
compatible with $\preceq$,\sn 2)\ \ $w$ is convex,
\sn 3)\ \ $I_w$ is convex,\sn
4)\ \ $I_w\prec 1\,$,
\sn 5)\ \  the quasi-order  $\preceq$ induces canonically via the residue map
$a \mapsto aw$ a quasi-order on the residue field $Kw\>.$
\end{theorem}
\begin{proof}
Assume $\preceq =  \preceq _v$ is a p.q.o.
Compatible valuations are clearly convex, this follows from the definitions. Conversely if $w$ is convex and $0 =v(1) \leq v(a)\>,$ i.e. $a \preceq 1\>,$ then $a\in K_w$ by convexity. So $w$ is a coarsening of $v$. This establishes the equivalence of 1) and 2).\sn
If $w$ is convex,  $a \preceq b$ with $b \in I_w\>,$ then $0 < w(b) \leq w(a)$ by compatibility, so $a\in I_w$. Conversely assume $I_w$  convex, and let $a \preceq b$ with $b \in K_w\setminus I_w$. If $a\notin K_w$ then $a^{-1} \in I_w$. Now $b^{-1} \preceq a^{-1}$, so
$b^{-1} \in I_w\>,$ a contradiction.  This establishes the equivalence of 2) and 3). \sn
If $I_w$ is convex, then $w$ is a coarsening of $v$, so $I_w \subseteq I_v\prec 1$. Conversely, assume $I_w \prec 1$ and let  $a \preceq b$ with $b \in K_w\>$. If $a\notin K_w\>,$ then $a^{-1} \in I_w\>.$ So  $a^{-1}b \in I_w$ whence  $a^{-1}b\prec 1$. Multiplying by $a$ gives $b \prec a$, a contradiction. This establishes the equivalence of 3) and 4). \sn
Now let $w$ be a coarsening of $v\>$. Then $v$ induces canonically a valuation $v/w$ on the residue field $Kw$, defined by  $v/w(aw) := \infty$ if $aw = 0$ and  $v/w(aw):=v(a)$ otherwise
(\salmacite{ [EP]} p. 44) . The p.q.o.  $\preceq_{v/w}$ is precisely the  induced well defined quasi-order in 5), i.e. $aw \preceq _{v/w} bw$ if and only if $a  \preceq _v b$ holds.  Conversely, let  $\preceq _{v/w}$
 be a p.q.o. on $Kw$ induced by the residue map. This means that  $aw \preceq _{v/w} bw$ if and only if $a  \preceq _v b$ holds.  Then $w$ is a coarsening of $v$  (see  \cite[ p. 45]{ [EP]}).  This establishes the equivalence of 1) and 5).
\end{proof}
\begin{remark}  \label{OSRGA2}
 If $\preceq$ is an order then the induced quasi-order in 5) is also an order, if $\preceq$ is a p.q.o then the induced quasi-order in 5) is also a p.q.o.
\end{remark}
Let  $(K,\preceq)$  be a q.o. field. We define its {\bf natural valuation}, denoted by $v$,  to be the finest $\preceq$- convex valuation of $K$. If  $(K, \leq)$  is ordered,  then the natural valuation  is the valuation $v$ whose
valuation ring $K_v$ is the convex hull of $\Q$ in $K$.  In this case, the
natural valuation on $K$ satisfies  $v(x+y) = \min\{v(x), v(y)\}$ if sign($x$) = sign($y$) and for all $a,b\in K\,: a\geq b>0\;\;\;\Longrightarrow\;\;\;v(a)\leq v(b)\>.$ It
is characterized by the fact that the induced order on  its residue field $Kv$  is archimedean, i.e. the only equivalence classes for the archimean equivalence relation (see definition below following Lemma \ref{wconvvg}) are those of 0 and 1. If $w$ is a coarsening of a convex valuation,
 then $w$ also is convex. Conversely, a convex subring
containing $1$ is a valuation ring, see \cite[Section 2.2.2]{[EP]}.
The set ${\cal R}$ of all valuation rings $K_w$ of convex valuations
$w\ne v$ (i. e. all strict corsenings of $v$) is totally ordered by
inclusion. Its order type is called the {\bf rank of the ordered field}
$K$. For convenience, we will identify it with
${\cal R}$. For example, the rank of an archimedean ordered field is
empty since its natural valuation is trivial (i.e. its valuation ring is the field itself). The rank of the
rational function field $K=\R(t)$ with any order is a singleton:
${\cal R}= \{K\}$. Theorem \ref{wconv} is a characterization of the elements of the rank of the ordered field $(K, \leq)$.  Note that
the rank of $(K, \leq)$  is  invariant under isomorphisms of ordered fields.
\sn
If  $(K,\preceq)$ is p.q.o. then the unique valuation $v$ such that $\preceq= \preceq _v$  is the natural valuation. A compatible valuation $w$ is a coarsening of $v$. We define the \slind{rank of  the valued field} $(K, v)$  to be the (order type of the) totally ordered set ${\cal R}$ of all strict corsenings of $v$. Thus, Theorem \ref{wconv} is a characterization of the elements of the rank of $(K,v)$.  Note that
the rank of $(K, v)$  is  invariant under isomorphisms of valued fields.
As we recalled in the proof of Theorem \ref{wconv}, the natural valuation $v$ induces canonically a valuation $v/w$ on the residue field $Kw$ and $v$ is the \slind{compositum} of $w$ and $v/w$
(see  \cite[ pp. 44-45]{ [EP]}) . The p.q.o.  $\preceq_{v/w}$ is precisely the  induced quasi-order in Theorem \ref{wconv} 5). If $w = v\>,$  then $v/w$  is trivial. Thus $v$
is characterized by the fact that the induced p.q.o on its residue field $Kv$  is \slind{trivial}, i.e. the only equivalence classes of $\asymp$ are those of $0$ and $1$.
\begin{remark} \label{prime}
The maximal ideals $I_w$ appearing in Theorem \ref{wconv} 4) are prime ideals of the valuation ring $K_v$.  The strict coarsenings $K_w$ of $K_v$ are the localizations of $K_v$ at the prime ideals $\{0\} \subseteq I \subset I_v$,  \cite[Lemma 2.3.1 p. 43]{[EP]}, \cite[Theorem 15, p. 40]{[ZS]}. Thus the rank is also isomorphic to the totally ordered (by reverse inclusion) set of prime ideals of $K_v$ which are strictly contained in the maximal ideal $I_v$.
\end{remark}
We now want to characterize the rank by going down to the value group. Let  $v$ be the natural valuation on the q.o. field  $(K,\preceq)$.
We set $G = v(K^{\times})$.
 Recall that the set of all convex subgroups
$G_w \ne \{0\}$ of the value group $G$ is totally ordered by
inclusion. Its order type is called the {\bf rank} of~$G$, it is an isomorphism invariant, see  \cite {[Fu]} or \cite{ [PC]}. To every convex valuation ring $K_w$, we
associate a convex subgroup $G_w\>:=\>\{v(a)\mid a\in K\,\wedge\
w(a)=0\}\>=\> v({\cal U}_w)\;.$  We call $G_w$ the {\bf convex
subgroup associated to $w$}. Note that $G_v=\{0\}$. Conversely,
given a convex subgroup $G_w$ of $v(K^{\times})$, we define $w: K \ra
v(K^{\times})/G_w$ by $w(a)=v(a)+G_w$. Then $w$ is a convex valuation with
$v({\cal U}_w)=G_w$ (and $v$ is strictly finer than $w$ if and only if
$G_w \not= \{0\}$). We call $w$ the {\bf convex valuation associated
to $G_w$}.
 We summarize the above discussion in the following lemma, for more details see \cite{[EP]}, or \cite {[Fu]} or \cite{ [PC]}.
\begin{lemma}\label{wconvvg}
The correspondence $K_w \mapsto G_w$ is an order preserving
bijection, thus ${\cal R}$ is (isomorphic to) the rank of $G$.
\end{lemma}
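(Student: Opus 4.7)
The plan is to verify that the two constructions described just before the lemma (namely $R_w \mapsto G_w := v(\mathcal{U}_w^{>0})$ and, conversely, $H \mapsto R_w$ where $w(a) := v(a)+H$) are mutually inverse and respect inclusion. Since by definition the rank of $G$ is the order type of the chain of nonzero convex subgroups of $G$, once the bijection and order-preservation are in hand the conclusion is immediate.

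First I would check well-definedness of $R_w\mapsto G_w$. That $G_w$ is a subgroup of $G$ is immediate from $v$ being a valuation and $\mathcal{U}_w^{>0}$ being a multiplicative group. For convexity I would use Lemma \ref{wconv}: suppose $v(a)\in G_w$ with $a>0$ and $w(a)=0$, and let $v(c)\in G$ with $|v(c)|\le v(a)$; choosing $c>0$, the element $c$ lies between $a$ and $a^{-1}$ in the ordering on $K^{>0}$ (because $v$ reverses the order on positives), and since $R_w$ is order-convex both $c$ and $c^{-1}$ lie in $R_w$, whence $v(c)\in G_w$. The hypothesis $w\ne v$ forces $G_w\ne\{0\}$, since otherwise $w(a)=0$ for every $a$ with $v(a)=0$, which would give $R_w=R_v$. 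Dually, for well-definedness of $H\mapsto w$, the map $w(a):=v(a)+H$ is manifestly a valuation, and convexity of $H$ together with the order-reversing behavior of $v$ on $K^{>0}$ show that $R_w=\{a\in K:v(a)\ge -h\text{ for some }h\in H^{\ge 0}\}$ is order-convex, hence convex by Lemma \ref{wconv}.

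Next I would establish that the two constructions are mutually inverse. Starting from a nonzero convex subgroup $H$ and forming $w(a)=v(a)+H$, the identity $w(a)=0\Leftrightarrow v(a)\in H$ gives $G_w=v(\mathcal{U}_w^{>0})=H$ directly. In the other direction, starting from a convex valuation $w$, the valuation with value group $G/G_w$ defined by $a\mapsto v(a)+G_w$ has the same positive units as $w$ (namely the $a>0$ with $v(a)\in G_w$) and hence, since its valuation ring is determined by its unit group in an ordered setting, coincides with $R_w$. Order-preservation is then essentially a tautology: if $R_{w_1}\subseteq R_{w_2}$, any positive unit of $w_1$ is a positive unit of $w_2$, so $G_{w_1}\subseteq G_{w_2}$; conversely, the inverse construction obviously preserves inclusion of subgroups.

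The only step that requires genuine care is the convexity of $G_w$ in Step 1, where one must translate back and forth between the multiplicative order on $K^{>0}$ (where $\mathcal{U}_w^{>0}$ sits as a convex subgroup) and the additive order on $G$ (on which $v$ is order-reversing); this is the main technical point, but it is a standard manipulation once Lemma \ref{wconv} is in place.
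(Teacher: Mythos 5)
Your proof is correct and follows the same route the paper intends: it verifies that the two constructions given just before the lemma ($R_w\mapsto G_w=v({\cal U}_w^{>0})$ and, conversely, $G_w\mapsto w$ with $w(a)=v(a)+G_w$) are mutually inverse and inclusion-preserving in both directions, which is exactly the verification the paper leaves implicit under ``We summarize''. One minor slip worth fixing: to see that $w\ne v$ forces $G_w\ne\{0\}$, the relevant implication is the reverse of the one you wrote --- if $G_w=\{0\}$ then every unit of $R_w$ has $v$-value $0$, and since any $a\in R_w\setminus R_v$ would be a unit of $R_w$ (its inverse lies in $R_v\subseteq R_w$) with $v(a)<0$, this forces $R_w=R_v$.
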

We now want to characterize the rank by going further down to the value set of the value group.
Recall that on the negative cone $G^{< 0}$ of an ordered abelian group $G$,
the {\bf archimedean equivalence} relation $\sim$ is defined by: $a\sim b$ if and only if
there is $n\in \N$ such that $a\geq nb$ and $b\geq na$.
Let $v_G$ be the map $a\mapsto
[a]_{\sim}\,$, where $[a]_{\sim}$ denotes the equivalence class of $a$. The order on $\Gamma:=G^{<0}/\sim$ is
the one induced by the order of $G^{<0}$. We call
$v_G(G^{<0}):=\Gamma$ the {\bf value set of $G$}. By convention we
also write $v_G(G):=\Gamma\cup\{\infty\}$ extending the archimedean equivalence
relation to the positive cone of $G$ by setting $v_G(g) := v_G(-g)$ and $v_{G}(0)=\infty>\Gamma$.
The map $v_G$ on $G$ satisfies the ultrametric
triangle inequality, and in particular we have: $v_G(x+y)
 = \min\{v_G(x), v_G(y)\}$ if sign($x$) = sign($y$). We call   $v_G$ the {\bf natural valuation on $G$}.

We now recall the relation between the rank of $G$ and the
value set $\Gamma$ of $G$. To $G_w \ne \{0\}$ a convex subgroup, we
associate $\Gamma_w := v_G(G_w ^{<0})$ a non-empty final segment of
$\Gamma$. Conversely, if $\Gamma _w$ is a non-empty final segment of
$\Gamma$, then $G_w = \{g\mid g\in G, v_G(g) \in \Gamma _w \} \cup
\{0\}$ is a convex subgroup, with $\Gamma _w = v_G(G_w)$. Let us
denote by $\Gamma ^{\rm fs}$ the set of non-empty final segments of
$\Gamma $, totally ordered by inclusion.  We summarize the above discussion in the following lemma, for more details see \cite{[EP]}, or \cite {[Fu]} or \cite{ [PC]}.
\begin{lemma}  \label{wconvvs}                            
The correspondence $G_w \mapsto \Gamma _w$ is an order preserving
bijection, thus the rank of $G$ is (isomorphic to) $\Gamma ^{\rm
fs}$.
\end{lemma}
Combining Lemmas \ref{wconvvg} and  \ref{wconvvs} we obtain the following result. Note that Theorem \ref{theorem1OSMT} will also follow, by a different argument, from Theorem \ref {theorem3OSMT} in the next section.
\begin{theorem} \label{theorem1OSMT}
The correspondence $K_w \mapsto \Gamma _w$ is an order preserving
bijection, thus ${\cal R}$ is (isomorphic to) $\Gamma ^{\rm fs}$.
\end{theorem}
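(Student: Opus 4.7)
The plan is to observe that this theorem is essentially an immediate corollary obtained by composing the two bijections established in Lemma \ref{wconvvg} and Lemma \ref{csfs}. I would simply chain the two correspondences together and verify that the composition still has the required properties.

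More precisely, I would first recall that Lemma \ref{wconvvg} provides an order-preserving bijection between $\mathcal{R}$ (the set of valuation rings of convex valuations $w \ne v$) and the set of nontrivial convex subgroups $G_w$ of $G = v(K)$, given by $R_w \mapsto G_w = v(\mathcal{U}_w^{>0})$. Next, Lemma \ref{csfs} provides an order-preserving bijection between the set of nontrivial convex subgroups of $G$ and $\Gamma^{\rm fs}$, given by $G_w \mapsto \Gamma_w = v_G(G_w^{<0})$. Composing these two maps yields the assignment $R_w \mapsto \Gamma_w$ claimed in the theorem. Since a composition of order-preserving bijections is an order-preserving bijection, the result follows.

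The only thing worth checking explicitly is that the correspondence in the statement really is the composition — i.e.\ that $\Gamma_w$ as associated to $R_w$ in the theorem agrees with $v_G(G_w^{<0})$ where $G_w$ is the convex subgroup associated to $R_w$ via Lemma \ref{wconvvg}. This is transparent from the definitions given in the two lemmas, so no genuine obstacle arises; the theorem is essentially a bookkeeping statement that packages the two previous lemmas together. I would therefore keep the proof to a single short paragraph, remarking that the composition is order-preserving because each factor is, and that both factors are bijections by the respective lemmas.
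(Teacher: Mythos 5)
Your proposal is correct and matches the paper's intended argument: Theorem \ref{theorem1OSMT} is stated immediately after Lemmas \ref{wconvvg} and \ref{wconvvs} precisely as their composition, with no further proof given, and the compatibility of the two correspondences ($\Gamma_w = v_G(G_w^{<0})$ for $G_w = v({\cal U}_w^{>0})$) is indeed definitional. (The paper later offers a second, independent route via Theorem \ref{theorem3OSMT} and the equivalence-relation picture, but that is explicitly presented as an \emph{alternative} proof, not the primary one.)
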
 \mn
A final segment which has a least element is a \slind{principal
final segment}. It is of the form $\{\gamma'\mid\gamma'\in\Gamma,\gamma'\geq\gamma\}$, for some $\gamma \in \Gamma$.  Let $\Gamma ^*$  denote the set $\Gamma$ with its
reversed ordering. The proof of the following Lemma is now routine.
\begin{lemma}                               \label{vsr}
The map from $\Gamma$ to $\Gamma ^{\rm fs}$ defined by
$\gamma\mapsto\{\gamma'\mid\gamma'\in\Gamma,\gamma'\geq\gamma\}$ is
an order reversing embedding. Its image is the set of principal
final segments.  Thus $\Gamma ^*$ is (isomorphic to) the totally
ordered set of principal final segments.
\end{lemma}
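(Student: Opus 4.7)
The plan is essentially a direct unpacking of the definitions; the lemma is elementary, so the proposal is really to verify three things in turn: well-definedness (plus non-emptiness), the order-reversing embedding property, and the identification of the image.

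First I would check that the map is well-defined by observing that $F_\gamma := \{\gamma' \in \Gamma : \gamma' \geq \gamma\}$ is non-empty (it contains $\gamma$) and is closed upward in $\Gamma$ (by transitivity of $\leq$), so $F_\gamma \in \Gamma^{\rm fs}$.

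Next I would prove the order-reversing embedding property. For one direction, if $\gamma \leq \delta$ then any $\gamma' \geq \delta$ also satisfies $\gamma' \geq \gamma$, so $F_\delta \subseteq F_\gamma$. For the converse, if $F_\delta \subseteq F_\gamma$, then $\delta \in F_\delta \subseteq F_\gamma$ gives $\delta \geq \gamma$. Combining the two directions, $\gamma \leq \delta$ holds if and only if $F_\delta \subseteq F_\gamma$; this simultaneously gives order reversing (for $\leq$ versus $\supseteq$) and injectivity, since $F_\gamma = F_\delta$ forces both $\gamma \leq \delta$ and $\delta \leq \gamma$, i.e.\ $\gamma = \delta$.

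Finally I would identify the image. By construction $F_\gamma$ has $\gamma$ as its smallest element, so each $F_\gamma$ is principal. Conversely, if $F \in \Gamma^{\rm fs}$ has a smallest element $\gamma_0$, then membership in $F$ is, by the final-segment property, equivalent to being $\geq \gamma_0$, whence $F = F_{\gamma_0}$ lies in the image. I do not expect a genuine obstacle here; the only small point of care is remembering the order on $\Gamma^{\rm fs}$ is inclusion, so ``order reversing'' means $\gamma \leq \delta \Leftrightarrow F_\gamma \supseteq F_\delta$, which is why the embedding reverses the order rather than preserves it.
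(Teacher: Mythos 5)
Your proof is correct and is exactly the routine verification the paper has in mind (the paper states this lemma without proof, treating it as elementary). All three steps — well-definedness in $\Gamma^{\rm fs}$, the equivalence $\gamma\leq\delta\Leftrightarrow F_\delta\subseteq F_\gamma$ giving both order reversal and injectivity, and the identification of the image with the principal final segments — are complete and accurate.
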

\mn For the notions and results in this last paragraph of the section, we refer the reader to \cite{[Fu]} or \cite{[PC]} for more details. Recall that a convex subgroup $G_w$ of $G$ is called {\bf
principal generated by $g$}, $g\in G$, if $G_w$ is the minimal
convex subgroup containing $g$. The {\bf principal rank} of $G$ is
the subset of the rank of $G$ consisting of all principal $G_w \ne
\{0\}$ .
\begin{lemma}                               \label{cpcs}
Let $G_w \ne \{0\}$ be a convex subgroup. Then $G_w$ is principal if
and only if $v_G(G_w) = \Gamma _w$ is a principal final segment.
\end{lemma}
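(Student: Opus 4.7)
The plan is to reduce the statement to the bijection between convex subgroups of $G$ and non-empty final segments of $\Gamma$ established in Lemma \ref{wconvvs}, combined with a direct computation of the value set of the convex subgroup generated by a single element.

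For the forward direction, suppose $G_w$ is principal, generated by some $g \in G$; replacing $g$ by $-g$ if necessary, we may take $g > 0$. I would first recall that the minimal convex subgroup containing $g$ is exactly
$$\{h \in G : |h| \leq ng \text{ for some } n \in \N\}.$$
The key observation is that, by the definition of the archimedean equivalence $\sim_\varphi$ on $G^{<0}$ (with $\varphi(a)=2a$), the condition $|h| \leq ng$ for some $n$ is equivalent to $v_G(h) \geq v_G(g)$: either $|h|$ is archimedean-equivalent to $g$, or $|h|$ lies in a strictly larger $v_G$-class. Hence $G_w = \{h \in G : v_G(h) \geq v_G(g)\} \cup \{0\}$, and taking $v_G$ yields
$$\Gamma_w \;=\; \{\gamma \in \Gamma : \gamma \geq v_G(g)\},$$
which is the principal final segment generated by $v_G(g)$.

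For the converse, suppose $\Gamma_w$ is a principal final segment with smallest element $\gamma_0$. Choose any $g \in G_w \setminus \{0\}$ with $v_G(g) = \gamma_0$. By the computation in the previous paragraph, the minimal convex subgroup $\langle g\rangle$ of $G$ containing $g$ has value set equal to $\{\gamma \in \Gamma : \gamma \geq \gamma_0\} = \Gamma_w$. Since the correspondence $G_w \mapsto \Gamma_w$ of Lemma \ref{wconvvs} is a bijection, two convex subgroups with the same value set coincide, so $\langle g \rangle = G_w$, and $G_w$ is principal as required.

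The only non-routine point is the equivalence between $|h| \leq ng$ and $v_G(h) \geq v_G(g)$ in Step 1, which I do not expect to be an obstacle since it is essentially a restatement of the definition of the archimedean equivalence relation used to build $\Gamma$. Once this is in place, the rest of the proof is formal, relying only on Lemma \ref{wconvvs} and the order-reversing embedding of $\Gamma$ into $\Gamma^{\rm fs}$ from Lemma \ref{vsr}.
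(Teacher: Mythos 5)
Your proof is correct; the paper states Lemma \ref{cpcs} without proof, and your argument — identifying the minimal convex subgroup containing $g$ as $\{h : |h|\leq ng \text{ for some } n\}$, translating this via the archimedean equivalence into the condition $v_G(h)\geq v_G(g)$, and then using the injectivity of the correspondence of Lemma \ref{wconvvs} for the converse — is precisely the routine verification the authors leave implicit.
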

\begin{lemma}                               \label{star}
The map $G_w \mapsto \min \Gamma_w$  is an order reversing
bijection from the principal rank of $G$ onto $\Gamma$ . Thus the principal rank of $G$ is (isomorphic to) $\Gamma
^*$.
\end{lemma}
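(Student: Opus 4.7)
The plan is to realize the map $G_w \mapsto \min v_G(G_w)$ as a composition of bijections already established in the preceding lemmas, read off the order behavior from each factor, and thereby identify the principal rank of $G$ with $\Gamma^{*}$.

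First, I would invoke Lemma \ref{wconvvs}: the assignment $G_w \mapsto \Gamma_w = v_G(G_w^{<0})$ is an order preserving bijection from the set of non-zero convex subgroups of $G$ onto $\Gamma^{\rm fs}$. By Lemma \ref{cpcs}, $G_w$ is principal if and only if $\Gamma_w$ is a principal final segment; restricting the bijection of Lemma \ref{wconvvs} therefore yields an order preserving bijection from the principal rank of $G$ onto the set of principal final segments of $\Gamma$. In particular, for such $G_w$, the final segment $\Gamma_w$ has a smallest element, so $\min v_G(G_w)$ is well-defined.

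Next, I would invoke Lemma \ref{vsr}: the map $\gamma \mapsto \{\gamma' \in \Gamma \mid \gamma' \geq \gamma\}$ is an order reversing embedding of $\Gamma$ whose image is precisely the set of principal final segments of $\Gamma$. Its inverse is exactly the map that sends a principal final segment to its minimum element, and this inverse is likewise an order reversing bijection from the set of principal final segments of $\Gamma$ onto $\Gamma$ (equivalently, an order preserving bijection onto $\Gamma^{*}$).

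Composing the order preserving bijection $G_w \mapsto \Gamma_w$ with the order reversing bijection $\Gamma_w \mapsto \min \Gamma_w$ gives the map $G_w \mapsto \min v_G(G_w)$ as an order reversing bijection from the principal rank of $G$ onto $\Gamma$, hence an order preserving bijection onto $\Gamma^{*}$. There is no real obstacle beyond keeping track of which direction each constituent bijection reverses; the content is entirely packaged in Lemmas \ref{wconvvs}, \ref{cpcs}, and \ref{vsr}, so the proof is essentially a one-line composition once these are in hand.
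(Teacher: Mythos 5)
Your proof is correct and is exactly the derivation the paper intends: the lemmas preceding the statement (\ref{wconvvs}, \ref{cpcs}, \ref{vsr}) are arranged precisely so that Lemma \ref{star} follows by composing the order preserving bijection $G_w \mapsto \Gamma_w$ restricted to principal subgroups with the order reversing bijection $\Gamma_w \mapsto \min \Gamma_w$. The paper leaves this composition implicit, and your bookkeeping of which factor reverses the order is accurate.
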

\sn
We set: $\mbox{\bf P}_K:=K^{\succeq 0}\setminus K_v$, where $K^{\succeq 0}: = \{a\in K \>; a\succeq  0\}$. A $K_w \in \cal{R}$ is \slind{principal
generated by $a$} for $a \in \mbox{\bf P} _K$ if $K_w$ is the
smallest (convex) subring containing $a$. We observe:
\sn
\begin{lemma}\label{principal} Let $K_w\in\cal{R}$. Then, $K_{w}$ is principal generated by $a$  if and only if
$K_{w}=\{b\in K:\;\exists n\in \N_{0} \> s.t.\;\;b\preceq_{v}  a^n\}$.
\end{lemma}
\begin{proof}
It is enough to verify that $\{b\in K:\;\exists n\in \N_{0}\;\;b\preceq_{v}  a^n\}$ is a subring of $K$. Let $b_{1}\preceq_{v} a^{n_{1}}$ and $b_{2}\preceq_{v}  a^{n_{2}}$. Then $b_{1}b_{2}\preceq_{v}  a^{n_{1}+n_{2}}$ and $b_{1}+b_{2}\preceq_{v} a^{max\{n_{1},n_{2}\}}$. Clearly, this ring contains $K_v$ and $a$.
\end{proof}
\sn
The {\bf principal rank} of
$K$ is the subset \gloss{${\cal R}^{\rm pr}$} of ${\cal R}$
consisting of all principal $K_w\in {\cal R}$. Combining the last three lemmas we obtain:
\begin{theorem} \label{theorem2OSMT}
The correspondence $K_w \mapsto \Gamma _w$ is an order preserving
bijection between  ${\cal R}^{\rm pr}$ and the principal rank of
$G$, thus ${\cal R}^{\rm pr}$ is (isomorphic to) $\Gamma ^*$.
\end{theorem}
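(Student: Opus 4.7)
The plan is to restrict the order-preserving bijection $R_w \mapsto \Gamma_w$ of Theorem \ref{theorem1OSMT} to the principal subsets on each side. By Lemma \ref{cpcs}, a non-trivial convex subgroup $G_w$ is principal if and only if $\Gamma_w = v_G(G_w)$ is a principal final segment of $\Gamma$, so once I know that $R_w \in {\cal R}^{\rm pr}$ iff $G_w$ is principal, the statement will follow by composing with Lemma \ref{star}. Everything therefore reduces to this single equivalence between principality on the ring side and principality on the value-group side.

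To obtain it, I would first establish the auxiliary fact that for every $a\in \mbox{\bf P}_K$ and every non-trivial convex valuation $w$, one has $a\in R_w \Leftrightarrow v(a)\in G_w$. The implication ``$\Leftarrow$'' is immediate from the definition $w(a)=v(a)+G_w$, which then equals $0$. For ``$\Rightarrow$'', $a\in R_w$ means $w(a)\geq 0$, i.e.\ $v(a)+G_w\geq G_w$ in the quotient $v(K)/G_w$; but $a\in \mbox{\bf P}_K$ forces $v(a)<0$, and since $G_w$ is convex and contains $0$, the strict inequality $v(a)+G_w>G_w$ is ruled out. Hence $w(a)=0$ and $v(a)\in G_w$.

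With this in hand, Lemma \ref{wconvvg} puts the family of convex subrings of $K$ containing a fixed $a\in \mbox{\bf P}_K$ in order-preserving bijection with the family of convex subgroups of $G$ containing $v(a)$, so one family has a minimum if and only if the other does, and the minima correspond. Thus $R_w$ is principal generated by $a$ exactly when $G_w$ is principal generated by $v(a)$, which is the equivalence needed above. Composing with Lemma \ref{cpcs} yields the order-preserving bijection from ${\cal R}^{\rm pr}$ onto the set of principal final segments of $\Gamma$, and Lemma \ref{star} identifies the latter with $\Gamma^{\ast}$. The only real content of the argument is the auxiliary fact in the middle paragraph; the rest is a mechanical assembly of the preceding lemmas, so I do not anticipate any serious obstacle.
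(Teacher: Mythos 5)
Your proposal is correct and follows essentially the route the paper intends: Theorem \ref{theorem2OSMT} is presented there as the composition of Lemma \ref{wconvvg} (restricted to principal objects), Lemma \ref{cpcs} and Lemma \ref{star}, and your auxiliary fact $a\in R_w \Leftrightarrow v(a)\in G_w$ for $a\in \mbox{\bf P}_K$ is precisely the detail the paper leaves implicit in matching ``$R_w$ principal generated by $a$'' with ``$G_w$ principal generated by $v(a)$''. (The paper also rederives the statement later as a consequence of Theorem \ref{theorem3OSMT} via the multiplicative equivalence relation, but that is explicitly an alternative proof.)
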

Note that Theorem \ref{theorem2OSMT} will also follow, by a different argument, from Theorem \ref {theorem3OSMT} in the next section.
\begin{remark}\label{completion} It is straightforward to verify that
an order preserving isomorphism $\psi:\Gamma_{1}\rightarrow
\Gamma_{2}$ induces an order preserving isomorphism $\psi^{\rm
fs}:\Gamma_{1}^{\rm fs}\rightarrow\Gamma_{2}^{\rm fs}$
(\cite[p.19]{[R]}). Thus $\Gamma$ determines $\Gamma^{\rm fs}$ up to
isomorphism. It follows from Theorems \ref{theorem1OSMT} and
\ref{theorem2OSMT} that if two q.o. fields have isomorphic
principal ranks, then they have isomorphic ranks. In the next
section we shall hence focus our attention on the principal
rank.
\end{remark}

\section{The  principal rank via equivalence relations} \label{partIIOSMT}
\sn
We begin by the following key observation: \mn

\begin{remark}                              \label{mm}

Let $\varphi$ be a map from a q.o. ordered set $(S, \preceq)$ into itself,
and assume that $\varphi$ is q.o. preserving,\ i.\ e. \ $a\preceq a'$
implies $\varphi(a)\preceq \varphi(a')$, for all $a,\> a' \in S$.
Assume that $\varphi$ has an orientation or is {\bf oriented}, \ i.\ e. \
$\varphi(a)\succeq a$ for all $a\in S\>$ ($\varphi$ is a \slind{right shift})
or $\varphi(a)\preceq a$ for all $a\in S\>$ ($\varphi$ is a \slind{left
shift}). We set $\varphi^0(a):=a$ and
$\varphi^{n+1}(a):=\varphi(\varphi^n(a))$ for $n\in \N _{0}:=\N \cup
\{0\}$.
 It is then straightforward that the following defines an
equivalence relation on $S$: \sn
 (i) If $\varphi$ is a right shift,
set $a\sim_{\varphi} a'$ if and only if there is some $n\in\N_{0}$
such that $\varphi^n (a)\succeq a' \> \mbox{ and } \> \varphi^n
(a')\succeq a$ (equivalently for some $n,\;m\in \N_{0}$, $\varphi^n (a)\succeq a' \> \mbox{ and } \> \varphi^m(a')\succeq a\;$),
 \sn (ii) If $\varphi$ is a left shift, set
$a\sim_{\varphi} a'$ if and only if there is some $n\in\N_{0}$ such
that $\varphi^n (a)\preceq a' \>  \mbox{ and } \> \varphi^n
(a')\preceq a$ (equivalently for some $n,\;m\in \N_{0}$, $\varphi^n
(a)\preceq a' \> \mbox{ and } \> \varphi^m(a')\preceq a\;$).
\sn
(iii) The equivalence
classes $[a]_{\varphi}$ of $\sim_{\varphi}$ are $\preceq$-convex and
closed under application of $\varphi$. By the $\preceq$-convexity,
the quasi-order of $S$ induces an order on $S/{\sim}_{\varphi}$ such that $[a]_{\varphi}\prec [b]_{\varphi}$ if and only if $a'\prec b'$ for
all $a'\in [a]_{\varphi}$ and $b'\in [b]_{\varphi}\,$.
\sn
 Note that if $\varphi$ is the identity map $\mathbb{I}$,
then the equivalence relation $\sim_{\mathbb{I}}$  is just
$\asymp$  associated to the q.o., and is the finest one such that $S/\sim_{\mathbb{I}}$ is an ordered set.
\end{remark}
\mn
We exploit Remark \ref{mm} to give an
interpretation of the rank and principal rank as quotients via an
appropriate equivalence relation, thereby providing - as promised in the previous section-  alternative
proofs for Theorem \ref{theorem1OSMT} and Theorem
\ref{theorem2OSMT}. It is precisely this approach that we will
generalize to the difference rank in Section \ref{principalsigmarank}.
Let  $v$ be the natural valuation on the q.o. field  $(K,\preceq)$. Recall that $\mbox{\bf P}_K$ denotes $K^{\succeq 0}\setminus K_v$.
Consider the following commutative diagram: \n
\parbox[c]{.4\textwidth}{
\begin{center}
\setlength{\unitlength}{0.002\textwidth}
\begin{picture}(200,250)(0,20)
\put(50,250){\bbox{${\bf P} _K$}} \put(50,150){\bbox{$G ^{<0}$}}
\put(50,50){\bbox{$v_G (G)$}} \put(150,250){\bbox{${\bf P} _K$}}
\put(150,150){\bbox{$G ^{<0}$}} \put(150,50){\bbox{$v_G (G)$}}
\put(80,50){\vector(1,0){40}} \put(80,150){\vector(1,0){40}}
\put(80,250){\vector(1,0){40}} \put(50,230){\vector(0,-1){60}}
\put(150,230){\vector(0,-1){60}} \put(50,130){\vector(0,-1){60}}
\put(150,130){\vector(0,-1){60}} \put(60,200){\bbox{$v$}}
\put(160,200){\bbox{$v$}} \put(60,100){\bbox{$v_G$}}
\put(160,100){\bbox{$v_G$}} \put(100,260){\bbox{$\varphi$}}
\put(100,160){\bbox{$\varphi_{G}$}}
\put(100,60){\bbox{$\varphi_{\Gamma}$}} \put(100,200){\bbox{\tiny\rm
///}} \put(100,100){\bbox{\tiny\rm ///}}
\end{picture}
\end{center}}\hfill\parbox[c]{.58\textwidth}{with
 $\varphi(a)\>:=\> a^2$ for all $a\in
{\bf P}_K\>,$ $\varphi$ is a right shift, \bn\bn $\varphi_{G}(v(a))\>:=\>v(\varphi(a))$ for all
$a\in {\bf P}_K\>,$\bn that is $\varphi_{G}(g)= 2g$ for all $g\in
G^{<0}\>,$  $\varphi_{G}$  is a left shift and \bn\bn $\varphi_{\Gamma}(v_G(g))\>:=\> v_G
(\varphi_{G} (g))$ for all $g\in G^{<0}\>,$ \bn that is
$\varphi_{\Gamma}(\gamma)=\gamma$ for all $\gamma\in\Gamma\>$, so
that $\varphi_{\Gamma}$ is just the identity map.} \bn By
Remark \ref{mm}, we can work with
the equivalence relations associated to the following oriented maps: the q.o. preserving map
$\varphi$ and the order preserving maps $\varphi_G$ and $\varphi_{\Gamma}$ (as defined on the right hand side of the above diagram). Note that
$\>\sim_{\varphi_G}\>$ is just archimedean equivalence on $G$ and
$\sim_{\varphi_{\Gamma}}$ is just equality on $\Gamma$. The following straightforward observation
will be useful for the proof of Theorem \ref{theorem3OSMT} below:
\begin{lemma}                               \label{corsavarphi}
The equivalence classes of $\sim_{\varphi}$ are closed under multiplication.
\end{lemma}
\begin{proof}
 The proof is similar to that of Lemma \ref{principal}. Let $a,\;b \in {\bf  P}_K$, and without loss of generality assume that $a\preceq b$ and $a\sim_{\varphi} b$.
We show that $ab\sim_{\varphi} a$.
Let $n\in \N_{0}$,  such that $b\preceq a^{2^n}$. By axiom qo$2$, $ab\preceq b^2$.
Thus $b^2\preceq a^{2^n}b$ and $ab\preceq a^{2^n}b$. So, $ab\preceq a^{2^{n+1}}$. Since $1\preceq b$, by axiom qo$2$, we get that $a\preceq ab$.
 Therefore, $ab\sim_{\varphi} a$.
\end{proof}
\begin{remark}\label{mmc} We note that
\begin{equation}
\varphi^n _G (v(a))\> = \> v(\varphi ^n (a)) \mbox{ and } \varphi^n _{\Gamma} (v_G(g))\> = \> v_G(\varphi^n _G (g))
\end{equation}
thus
\begin{equation}
a\sim_{\varphi} a' \mbox{ if and only if } v(a)\sim_{\varphi_{G}}v(a')
\mbox{ if and only if } v_G (v(a))\sim_{\varphi_{\Gamma}} v_G
(v(a'))\>
\end{equation}
Thus we have an order reversing bijection from ${\bf
P}_K/\sim_{\varphi}$ onto $\Gamma/\sim_{\varphi_{\Gamma}}=\Gamma$.
Thus the chain $[{\bf P}_K/\sim_{\varphi}]^{\rm is}$ of non-empty initial
segments of ${\bf P}_K/\sim_{\varphi}$ ordered by inclusion is
isomorphic to $\Gamma^{\rm fs}$. In particular,  initial segments which have a last element are in bijective correspondence to principal final segments.
Thus  the subchain of $[{\bf P}_K/\sim_{\varphi}]^{\rm
is}$ of initial segments which have a last element is isomorphic to $\Gamma ^*$ \footnote{Note that the subchain of $[{\bf P}_K/\sim_{\varphi}]^{\rm
is}$ of initial segments which have a last element is isomorphic to $[{\bf P}_K/\sim_{\varphi}]$ itself.}
Therefore, as promised in the previous section, Theorems \ref{theorem1OSMT} and \ref{theorem2OSMT} will now follow from the
following result:
\end{remark}
\begin{theorem}\label{theorem3OSMT}
The rank ${\cal R}$ is isomorphic to the chain $[{\bf
P}_K/\sim_{\varphi}]^{\rm is}$ and the principal rank ${\cal R}^{\rm
pr}$ is isomorphic to the subchain of $[{\bf P}_K/\sim_{\varphi}]^{\rm
is}$ of initial segments which have a last element.
\end{theorem}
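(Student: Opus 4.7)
The plan is to construct an explicit order-preserving bijection
\[
\Phi : {\cal R} \longrightarrow [{\bf P}_K/\sim_{\varphi}]^{\rm is}, \qquad \Phi(R_w) := \{[a]_{\varphi} : a \in R_w \cap {\bf P}_K\},
\]
and to obtain the principal rank statement by tracking which initial segments in the image possess a last element. The first step is to show $\Phi(R_w)$ really is an initial segment, i.e., that $R_w \cap {\bf P}_K$ is downward closed in ${\bf P}_K$ and closed under $\sim_{\varphi}$. The former is immediate from convexity of $R_w$. For the latter, note that any $a \in R_w \cap {\bf P}_K$ satisfies $w(a) = 0$: since $0 < 1/a < 1$, convexity of $R_w$ forces $1/a \in R_w$, whence $w(a) \leq 0$. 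Consequently $a^{2^n} \in R_w$ for all $n$, so if $a \sim_{\varphi} a'$ with $a \leq a'$ then $a \leq a' \leq a^{2^n}$ places $a'$ between two elements of $R_w$; the symmetric case is analogous.

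For the bijectivity of $\Phi$, order preservation is immediate, and injectivity follows from the reconstruction $R_w = R_v \cup \{a \in K : |a| \in R_w \cap {\bf P}_K\}$, which holds because every element outside $R_v$ has absolute value in ${\bf P}_K$. For surjectivity, given an initial segment $I$, form the saturation $\tilde I := \bigcup_{[a]_{\varphi} \in I}[a]_{\varphi}$ (a downward closed subset of ${\bf P}_K$) and set
\[
R_I := R_v \cup \{a \in K : |a| \in \tilde I\}.
\]
The main obstacle is verifying that $R_I$ is a convex subring of $K$. Convexity is built in, and multiplicative closure of $\tilde I$ is the content of Lemma \ref{corsavarphi}. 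For additive closure, the critical case is $a, b \in \tilde I$: one has $a + b \leq 2\max(a,b)$, and $\max(a,b) \sim_{\varphi} 2\max(a,b)$ because they are archimedean equivalent with respect to addition and $\sim_{\varphi}$ is coarser than that equivalence (again Lemma \ref{corsavarphi}). Downward closure of $\tilde I$ then places $a + b$ in $\tilde I$ whenever $a + b \in {\bf P}_K$, and otherwise $a + b \in R_v$ directly. The mixed case $a \in R_v$, $b \in \tilde I$ is handled the same way, noting $a + b$ is sandwiched between $b/2$ and $2b$. A direct check gives $R_I \cap {\bf P}_K = \tilde I$, so $\Phi(R_I) = I$.

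For the principal rank, I would observe that the smallest convex subring of $K$ containing a given $a \in {\bf P}_K$ coincides with $R_{I_a}$ for $I_a := \{[c]_{\varphi} : [c]_{\varphi} \leq [a]_{\varphi}\}$: such a subring must contain $a^n$ for all $n$, and by convexity every element of absolute value at most some $a^n$, while $a \sim_{\varphi} a^n$ pins the $\sim_{\varphi}$-classes down to those at most $[a]_{\varphi}$. Conversely, if $\Phi(R_w)$ has last element $[a]_{\varphi}$, the surjectivity construction identifies $R_w$ with this smallest convex subring, hence principal. Therefore $\Phi$ restricts to a bijection between ${\cal R}^{\rm pr}$ and the initial segments of ${\bf P}_K/\sim_{\varphi}$ admitting a last element, as required.
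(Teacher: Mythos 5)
Your proof is correct and takes essentially the same approach as the paper: the same correspondence $R_w \mapsto \{[a]_{\varphi} : a \in R_w \cap {\bf P}_K\}$, the same cofinality-of-powers argument for closure under $\sim_{\varphi}$, and the same reconstruction $R_v \cup \tilde I \cup (-\tilde I)$ for surjectivity, with Lemma \ref{corsavarphi} supplying the ring axioms. You simply spell out the verification that the paper leaves to the reader.
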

\begin{proof}
First we note that if $K_w$ is a convex valuation ring, then clearly
$K_w^{\succ 0}\setminus K_v^{\succ 0}$ is an initial segment of ${\bf P}_K$.
Moreover by Lemma \ref{principal}  if $K_w$
is principal generated by $a$, then $[a]_{\sim _{\varphi}}$ is the last class. Furthermore, if $K_w$ intersects an
equivalence class $[a]_{\sim_{\varphi}}$ then it must contain it,
since the sequence $a^n; n\in \N_0$ is cofinal in
$[a]_{\sim_{\varphi}}$ and $K_w$ is a convex subring. We conclude
that $(K_w^{\succ 0}\setminus K_v^{\succ 0})/{\sim}_{\varphi}$ is an initial
segment of ${\bf P}_K/{\sim}_{\varphi}$.
Conversely set ${\cal I}_w\>=\> \{[a]_{\varphi}\mid a\in
K_w^{\succ 0}\setminus K_v^{\succ 0}\}\>.$ Given ${\cal I}\in [{\bf P}_K/
{\sim}_{\varphi}]^{\rm is}$, we show that there is a convex
valuation ring $K_w$ such that ${\cal I}_w = {\cal I}$. Given ${\cal
I}$, let $(\bigcup {\cal I})$ denote the set theoretic union of the
elements of ${\cal I}$ and $-(\bigcup {\cal I})$ the set of additive
inverses. Set $K_w=-\left(\bigcup {\cal I}\right)\cup K_v \cup
\left(\bigcup {\cal I}\right)\>.$ We claim that $K_w$ is the
required ring. Clearly, ${\cal I}_w = {\cal I}$. Further $K_w$ is
convex (by its construction), and strictly contains $K_v$. We leave
it to the reader, using Lemma \ref{corsavarphi} and  Lemma \ref{principal}, to verify that
$K_w$ is a ring, and that $K_w$ is principal generated by $a$ if
$[a]_{\sim {\varphi}}$ is the last element of ${\cal I}$.
\end{proof}
\section{The difference analogue of the rank}         \label{diffanal}
In this section, we develop a difference analogue of what has been
reviewed above. That is, we develop a theory of difference
compatible valuations, in analogy to the theory of convex
valuations. The automorphism will play the role that multiplication
plays in the previous case. \sn Let $(K, \preceq)$ be a q.o. field and $\sigma$ be a {\bf q.o. preserving}
field automorphism of $K$, that is, $a\preceq a'$
if and only if $\sigma(a)\preceq \sigma(a')$, for all $a,\> a' \in K$. We say that $(K, \preceq, \sigma)$  is a {\bf q.o. difference field.}
\begin{remark} \label{ordervsnatval}
Let $(K, \leq, \sigma)$  be an ordered difference field. Recall that the natural valuation $v$ on $K$ is defined by archimedean equivalence. Since archimedean equivalence is preserved under order preserving automorphisms, we see that  $\sigma$ is also  $\preceq_v$ preserving (so that $(K, \preceq_v, \sigma)$  is a q.o.  difference field). The converse fails: Consider the field of real Laurent series $K:=\R((t))$ endowed with the lexicographic order and the corresponding natural valuation $v_{\min}$ (see definitions following Corollary \ref{omega} below).  The map $t\mapsto (-t)$ defines a field automorphisme $\sigma$ on $K$ which clearly preserves $v_{\min}$ but not the lexicographic order on $K$.
\end{remark}
\sn
Now let
$(K, \preceq, \sigma)$ be a non-trivial (i.e.  $\sigma\not=$ identity) q.o. difference field and $v$ its natural valuation. By definition,
 $\sigma$ satisfies for all $a,b\in K\>: v(a)\leq v(b)\;\mbox{
if and only if }v(\sigma(a))\leq v(\sigma(b))$ and thus induces an
order preserving automorphism $\sigma_G$ and $\sigma_{\Gamma}$ such
that the following diagram commutes: \n
\parbox[c]{.4\textwidth}{
\begin{center}
\setlength{\unitlength}{0.002\textwidth}
\begin{picture}(200,250)(0,20)
\put(50,250){\bbox{${\bf P}_K$}} \put(50,150){\bbox{$G ^{<0}$}}
\put(50,50){\bbox{$v_G (G)$}} \put(150,250){\bbox{${\bf P}_K$}}
\put(150,150){\bbox{$G ^{<0}$}} \put(150,50){\bbox{$v_G (G)$}}
\put(80,50){\vector(1,0){40}} \put(80,150){\vector(1,0){40}}
\put(80,250){\vector(1,0){40}} \put(50,230){\vector(0,-1){60}}
\put(150,230){\vector(0,-1){60}} \put(50,130){\vector(0,-1){60}}
\put(150,130){\vector(0,-1){60}} \put(60,200){\bbox{$v$}}
\put(160,200){\bbox{$v$}} \put(60,100){\bbox{$v_G$}}
\put(160,100){\bbox{$v_G$}} \put(100,260){\bbox{$\sigma$}}
\put(100,160){\bbox{$\sigma_{G}$}}
\put(100,60){\bbox{$\sigma_{\Gamma}$}} \put(100,200){\bbox{\tiny\rm
///}} \put(100,100){\bbox{\tiny\rm ///}}
\end{picture}
\end{center}}\hfill\parbox[c]{.58\textwidth}{with
  $\sigma_{G}(v(a))\>:=\>v(\sigma(a))$ for all $a\in
{\bf P}_K\>,$ \bn\bn and \bn\bn $\sigma_{\Gamma}(v_G(g))\>:=\> v_G
(\sigma_{G} (g))$ for all $g\in G^{<0}\>.$} \bn Now let $w$ be a
convex valuation on $K$. Say $w$ is {\bf $\sigma$-compatible} if for
all $a,b\in K\>: w(a)\leq w(b)\;\mbox{ if and only if
}w(\sigma(a))\leq w(\sigma(b))\>.$  Thus $w$ is  $\sigma$-compatible if and only if $\sigma$  preserves the q.o. $\preceq _w$.
\bn The subset ${\cal R
_{\sigma}}:=\{\> K_w \in {\cal R}\>;\> w \mbox{ is } \sigma$-\mbox{
compatible }\} is the {\bf $\sigma$-rank} of $(K,\preceq,\sigma)$.
Similarly, the subset of all convex subgroups $G_w \ne \{0\}$ such
that $\sigma_G(G_w)=G_w$, i.e $G_w$ is {\bf $\sigma_G$- invariant}, is the {\bf
$\sigma$-rank} of~$G$. Finally, we denote by
$\sigma_{\Gamma}$-$\Gamma ^{\rm fs}$ the subset of non-empty final segments
$\Gamma _w$ such that $\sigma_{\Gamma}(\Gamma_w)=\Gamma_w$, i.e. $\Gamma _w$  is {\bf $\sigma_{\Gamma}$- invariant}.
\mn
The
following Theorem \ref{wconvsigma}, Lemmas \ref{wconvvgsigma} and \ref{wconvvssigma} are analogues of Theorem \ref{wconv}, Lemma \ref{wconvvg} and
Lemma \ref{wconvvs} respectively. They are verified by straightforward computations, using
basic properties of valuations rings on the one hand and of
automorphisms on the other (e.g.  $\sigma(A\setminus B) =\sigma(A)\setminus \sigma(B)$,  $\sigma(A) \subseteq B$ if and only
if $A \subseteq \sigma^{-1}(B)$ and $\sigma(A) \subseteq B$ if and
only if $\sigma(-A) \subseteq -B$). The equivalence of 1) and 7) in Theorem \ref{wconvsigma} follows from the compatibility of $\sigma$  with $w$ on the one hand, and from the definition of the induced q.o. on $Kw$ on the other. We call $K_w$ $\sigma$-compatible if any of the
equivalent conditions below holds.
\begin{theorem}                                        \label{wconvsigma}
The following assertions are equivalent for a convex valuation
$w\>$:\n 1)\ \ $w$ is $\sigma$--compatible\n 2)\ \ $w$ is
$\sigma^{-1}$--compatible\n 3)\ \ $\sigma(K_w)\>=\>K_w$\n 4)\ \
$\sigma(I_w)\>=\>I_w$ \n
5)\ \
$\sigma({\cal U}_w)\>=\>{\cal U}_w$
\n 6)\ \ $\sigma(K_w^{\succ 0}\setminus
K_v^{\succ 0})\>=\>K_w^{\succ 0}\setminus K_v^{\succ 0}$ \n 7)\ \ the map
$\sigma w: Kw\rightarrow Kw$ defined by $aw\mapsto \sigma(a)w$ is well-defined
and is a q.o. (with respect to the
induced q.o. on $Kw$ ) preserving field automorphism of $Kw$ .
\end{theorem}
\begin{remark}\label{sixandsixprime}
Let $(K, \leq, \sigma)$  be an ordered field with natural valuation $v$. In this case,  condition 7)  on $\sigma w$ in Theorem  \ref{wconvsigma} is referring to the induced order on the residue field $Kw$. Consider instead the following condition:\n
8)\ \ the map
$\sigma w: Kw\rightarrow Kw$ defined by $aw\mapsto \sigma(a)w$ is well-defined
and is a q.o. (with respect to the
 q.o.  $\preceq _{v/w}$  on $Kw$ ) preserving field automorphism of $Kw$ .\sn
We observe that 7) implies 8). Indeed, $ \sigma w$ is assumed to be order preserving on $Kw$ by 7).  Now $(Kw)(v/w) = Kv$ (see \cite[ Lemma 2.1]{[K-K]}). Therefore $v/w$ has archimedean residue field and is thus the natural valuation on the ordered field $Kw$. By Remark
 \ref{ordervsnatval} we obtain the assertion.
\end{remark}
\begin{remark} \label{sigmaprime}
The maximal ideals $I_w$ appearing in Theorem \ref{wconvsigma} 4) are $\sigma$- invariant prime ideals  (also called transformally prime ideals in \cite{[C]}) of the valuation ring $K_v$ and the coarsenings $K_w$ are just the localizations of $K_v$ at those  $\sigma$- invariant prime ideals, see \cite[ Lemma 2.3.1 p. 43]{[EP]}. Thus the $\sigma$- rank is also characterized by the chain of $\sigma$- invariant  prime ideals of $K_v$.
\end{remark}
\begin{lemma}\label{wconvvgsigma}
The correspondence $K_w \mapsto G_w$ is an order preserving
bijection from  ${\cal R}_{\sigma}$ onto the $\sigma_G$-rank of $G$.
\end{lemma}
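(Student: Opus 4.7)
The plan is to leverage the order-preserving bijection $R_w \mapsto G_w$ already furnished by Lemma \ref{wconvvg} between $\mathcal{R}$ and the rank of $G$, and simply verify that it restricts to a bijection between the $\sigma$-compatible rings and the $\sigma_G$-invariant convex subgroups. Order preservation is then inherited for free, so the entire content is the equivalence:
\[
\sigma(R_w) = R_w \quad \Longleftrightarrow \quad \sigma_G(G_w) = G_w,
\]
where the left-hand side is one of the equivalent formulations of $\sigma$-compatibility from Lemma \ref{wconvsigma}.

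For the forward implication, I would start from $\sigma(R_w) = R_w$ and recall that $G_w = v(\mathcal{U}_w^{>0})$. Since $\sigma$ is an order-preserving field automorphism fixing $R_w$ setwise, it maps units of $R_w$ to units of $R_w$ and preserves positivity, hence $\sigma(\mathcal{U}_w^{>0}) = \mathcal{U}_w^{>0}$. Applying $v$ and using the commutation relation $\sigma_G \circ v = v \circ \sigma$ built into the diagram, one concludes $\sigma_G(G_w) \subseteq G_w$; the reverse inclusion follows by the same argument applied to $\sigma^{-1}$, which is also $\sigma$-compatible by Lemma \ref{wconvsigma}(2).

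For the converse, assume $\sigma_G(G_w) = G_w$ and take $a \in R_w^{>0}$. Using the description $R_w^{>0} = R_v^{>0} \cup \{a > 0 : v(a) \in G_w^{<0}\}$ (which follows from $w(a) = v(a) + G_w$ and $w(a) \geq 0$), I would split into two cases. If $v(a) \geq 0$, then $v(\sigma(a)) = \sigma_G(v(a)) \geq \sigma_G(0) = 0$ by order-preservation, so $\sigma(a) \in R_v \subseteq R_w$. If $v(a) \in G_w^{<0}$, then $\sigma_G$ being order-preserving and fixing $G_w$ sends $G_w^{<0}$ to itself, so $v(\sigma(a)) \in G_w^{<0}$ and again $\sigma(a) \in R_w$. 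Thus $\sigma(R_w^{>0}) \subseteq R_w^{>0}$; extending to all of $R_w$ via $\sigma(-x) = -\sigma(x)$ and $\sigma(0) = 0$, and running the same argument for $\sigma^{-1}$, yields $\sigma(R_w) = R_w$.

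The main technical point is the converse direction: one needs the twin facts that $\sigma_G$ both preserves the sign of values (because it is order-preserving with $\sigma_G(0) = 0$) and restricts bijectively to $G_w^{<0}$ when $G_w$ is invariant. Everything else is bookkeeping in the commutative diagram relating $\sigma$, $\sigma_G$, $v$, and $v_G$.
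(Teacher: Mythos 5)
Your proof is correct and is exactly the ``straightforward computation'' the paper alludes to but does not write out: it restricts the bijection of Lemma \ref{wconvvg} to the $\sigma$-compatible rings and verifies $\sigma(R_w)=R_w \Leftrightarrow \sigma_G(G_w)=G_w$ via the commutation $\sigma_G\circ v = v\circ\sigma$ and the decomposition $R_w^{>0}=R_v^{>0}\cup\{a>0: v(a)\in G_w^{<0}\}$. Both directions check out, including the use of $\sigma^{-1}$ to upgrade inclusions to equalities, so this matches the paper's intended argument.
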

\begin{lemma}  \label{wconvvssigma}                            
The correspondence $G_w \mapsto \Gamma _w$ is an order preserving
bijection from the $\sigma_G$-rank of $G$ onto
$\sigma_{\Gamma}$-$\Gamma ^{\rm fs}$.
\end{lemma}
\sn We deduce from Lemma \ref{wconvvgsigma} and Lemma \ref{wconvvssigma} that the $\sigma$-rank is the order type of $\sigma_{\Gamma}$-$\Gamma ^{\rm fs}$:
\begin{theorem} \label{theorem1OSMTsigma}
The correspondence $K_w \mapsto \Gamma _w$ is an order preserving
bijection from  ${\cal R}_{\sigma}$ onto $\sigma_{\Gamma}$-$\Gamma ^{\rm fs}$.
\end{theorem} \mn
We now exploit this observation. An automorphism $\sigma$ is an {\bf
isometry} if $v(\sigma(a))=v(a)$ for all $a\in K$, equivalently
$\sigma_G$ is the identity automorphism, and a {\bf weak isometry}
if $\sigma_{\Gamma}$ is the identity automorphism. Every isometry is
a weak isometry. Note that if $\Gamma$ is a rigid chain (i.e the only order preserving automorphism is the identity map), then
$\sigma$ is necessarily a weak isometry. If $\sigma$ is a weak
isometry, then
$\sigma_{\Gamma}(v_G(g))\>=\>v_G(\sigma_{G}(g))\>=\>v_G(g)$, thus
$g$ is archimedean equivalent to $\sigma_{G}(g)$ for all $g$, and so
every convex subgroup is $\sigma_G$-invariant.
\begin{corollary} \label{weakiso}
 If $\sigma$ is a
weak isometry, then ${\cal R}_{\sigma}\>=\> {\cal R}$.
\end{corollary}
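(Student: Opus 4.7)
The plan is to show that the hypothesis of weak isometry forces every final segment of $\Gamma$ to be $\sigma_\Gamma$-invariant, and then to transport this along the order preserving bijections established in Lemmas \ref{wconvvgsigma} and \ref{wconvvssigma} back up to the level of convex valuation rings.

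First, I would observe that if $\sigma$ is a weak isometry then $\sigma_\Gamma$ is the identity on $\Gamma$. In particular, for every final segment $\Gamma_w \subseteq \Gamma$ we have $\sigma_\Gamma(\Gamma_w) = \Gamma_w$ trivially. Hence every non-empty final segment of $\Gamma$ lies in $\sigma_\Gamma$-$\Gamma^{\rm fs}$, that is, $\sigma_\Gamma$-$\Gamma^{\rm fs} = \Gamma^{\rm fs}$.

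Next, I would apply Lemma \ref{wconvvssigma}: the correspondence $G_w \mapsto \Gamma_w$ is an order preserving bijection from the $\sigma_G$-rank of $G$ onto $\sigma_\Gamma$-$\Gamma^{\rm fs}$, and by Lemma \ref{wconvvs} the same formula gives an order preserving bijection from the full rank of $G$ onto $\Gamma^{\rm fs}$. Comparing the two under the equality $\sigma_\Gamma$-$\Gamma^{\rm fs} = \Gamma^{\rm fs}$ yields that the $\sigma_G$-rank of $G$ coincides with the rank of $G$, i.e.\ every non-zero convex subgroup of $G$ is $\sigma_G$-invariant (this is exactly the remark made in the paragraph preceding the corollary). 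Then Lemma \ref{wconvvgsigma} together with Lemma \ref{wconvvg} gives ${\cal R}_\sigma = {\cal R}$ by the same matching of bijections.

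There is essentially no obstacle here: the corollary is a direct bookkeeping consequence of the chain of bijections, given that the identity automorphism stabilizes every subset. The only thing to be careful about is not inadvertently needing the stronger hypothesis that $\sigma$ is an isometry (in which case $\sigma_G$ itself is trivial); the weaker hypothesis suffices precisely because $\sigma_G$-invariance of a convex subgroup is a set-theoretic condition on a whole archimedean class, and archimedean classes are exactly the fibers of $v_G$, which is where $\sigma_\Gamma$ acts as the identity.
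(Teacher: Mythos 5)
Your proof is correct and follows essentially the same route as the paper: the paper likewise deduces the corollary from Lemmas \ref{wconvvgsigma} and \ref{wconvvssigma}, noting that when $\sigma_{\Gamma}$ is the identity, $v_G(\sigma_G(g))=v_G(g)$ for all $g$, so every convex subgroup of $G$ is $\sigma_G$-invariant. Your bookkeeping via the equality $\sigma_{\Gamma}$-$\Gamma^{\rm fs}=\Gamma^{\rm fs}$ and the paper's direct remark that $g$ is archimedean equivalent to $\sigma_G(g)$ are two phrasings of the same one-line argument.
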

\begin{corollary}\label{intersection}
The correspondence $K_w \mapsto \min \Gamma_w$ is an order
(reversing) isomorphism from ${\cal R}_{\sigma}\cap {\cal R}^{\rm
pr}$ onto the chain $\{\gamma\>;\> \sigma_{\Gamma}(\gamma) =
\gamma\}$ of fixed points of $\sigma_{\Gamma}$.
\end{corollary}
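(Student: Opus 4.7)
The plan is to piece together the bijections already established in the paper and overlay them with the $\sigma$-compatibility structure. By Theorem~\ref{theorem2OSMT}, the map $R_w\mapsto\Gamma_w$ is an order preserving bijection between ${\cal R}^{\rm pr}$ and the set of principal final segments of $\Gamma$, and by Lemma~\ref{star} (or equivalently Lemma~\ref{vsr}), the map $\Gamma_w\mapsto\min\Gamma_w$ is an order reversing bijection from the principal final segments onto $\Gamma$. Composing, $R_w\mapsto\min\Gamma_w$ is an order reversing bijection from ${\cal R}^{\rm pr}$ onto $\Gamma$. So the only task is to identify the image of the restriction of this map to ${\cal R}_\sigma\cap{\cal R}^{\rm pr}$.

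Next I would invoke Lemmas~\ref{wconvvgsigma} and~\ref{wconvvssigma} to transport $\sigma$-compatibility through the chain of bijections: $R_w$ is $\sigma$-compatible if and only if $G_w$ is $\sigma_G$-invariant, if and only if $\Gamma_w$ is $\sigma_\Gamma$-invariant. Thus ${\cal R}_\sigma\cap{\cal R}^{\rm pr}$ corresponds bijectively to the set of principal final segments $\Gamma_w$ that are fixed setwise by $\sigma_\Gamma$.

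The crux is then the observation that for a principal final segment $\Gamma_w=\{\gamma'\in\Gamma\mid \gamma'\geq\gamma\}$ with $\gamma=\min\Gamma_w$, the condition $\sigma_\Gamma(\Gamma_w)=\Gamma_w$ is equivalent to $\sigma_\Gamma(\gamma)=\gamma$. One direction is immediate: since $\sigma_\Gamma$ is an order preserving bijection of $\Gamma$, it sends the minimum of a set to the minimum of its image, so $\sigma_\Gamma(\Gamma_w)=\Gamma_w$ forces $\sigma_\Gamma(\gamma)=\gamma$. For the converse, if $\sigma_\Gamma(\gamma)=\gamma$, then for any $\gamma'\geq\gamma$ order preservation yields $\sigma_\Gamma(\gamma')\geq\sigma_\Gamma(\gamma)=\gamma$, giving $\sigma_\Gamma(\Gamma_w)\subseteq\Gamma_w$; the same argument applied to $\sigma_\Gamma^{-1}$ (which also fixes $\gamma$ and is order preserving) gives the reverse inclusion.

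Putting these pieces together, the map $R_w\mapsto\min\Gamma_w$ restricts to an order reversing bijection from ${\cal R}_\sigma\cap{\cal R}^{\rm pr}$ onto $\{\gamma\in\Gamma\mid\sigma_\Gamma(\gamma)=\gamma\}$, which is the statement to prove. There is no real obstacle here since everything is assembled from the previously established lemmas; the only point requiring a small argument is the equivalence between setwise invariance of a principal final segment and fixing its minimum, and even that is routine given that $\sigma_\Gamma$ is an order automorphism.
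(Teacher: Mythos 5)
Your proof is correct and follows essentially the same route the paper intends: the corollary is stated there without proof as a deduction from Lemmas \ref{wconvvgsigma} and \ref{wconvvssigma} together with the earlier rank/principal-rank correspondences, and your argument simply makes that deduction explicit, including the (routine but worth stating) equivalence between setwise $\sigma_\Gamma$-invariance of a principal final segment and $\sigma_\Gamma$ fixing its minimum.
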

\begin{proof}
By  Lemma \ref{cpcs},  set $\mbox{ min } \Gamma_w:= \gamma_0$. By Lemma \ref{wconvvgsigma} and  Lemma \ref{wconvvssigma},  $\Gamma_w$ in invariant under $\sigma_{\Gamma}$.
Since $\sigma_{\Gamma}$ is order preserving, we must have  $\sigma_{\Gamma}(\gamma_0) = \gamma_0$
\end{proof}
\mn At the other extreme $\sigma$ is said to be  {\bf $\omega$-increasing}  if $a^n \prec \sigma(a)$ for all $n\in\N_0$ and all $a\in {\bf P}_K$, and {\bf $\omega$-contracting} if  $\sigma^{-1}$ is $\omega$-increasing.
\begin{remark}\label{strict left shift}
Note that $\sigma$ is $\omega$-increasing (respectively, $\omega$-contracting)  if and only if
$\sigma_{\Gamma}$ is a {\bf strict left shift}, that is,  $\sigma_{\Gamma}(\gamma) < \gamma$ for all $\gamma \in \Gamma\>$
 (respectively, a {\bf strict right shift}, i.e. $\sigma_{\Gamma}(\gamma)> \gamma$ for all $\gamma \in \Gamma\>$).
Thus if $\sigma$ $\omega$-increasing or $\omega$-contracting, then $\sigma_{\Gamma}$ has no
fixed points.
\end{remark}
\begin{corollary}\label{omega}
If $\sigma$ is $\omega$-increasing or $\omega$-contracting, then ${\cal R}_{\sigma}\cap {\cal
R}^{\rm pr}$ is empty.
\end{corollary}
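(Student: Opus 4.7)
The plan is to chain together two results already established in the excerpt; no new computation is required.

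First I would invoke Remark \ref{strict left shift}, which tells us that the hypothesis $\sigma$ is $\omega$-increasing is equivalent to $\sigma_{\Gamma}$ being a strict left shift on $\Gamma$, i.e.\ $\sigma_{\Gamma}(\gamma)<\gamma$ for every $\gamma\in\Gamma$. In particular, the set
$$\{\gamma\in\Gamma\,;\,\sigma_{\Gamma}(\gamma)=\gamma\}$$
of fixed points of $\sigma_{\Gamma}$ is empty.

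Next I would apply Corollary \ref{intersection}, which exhibits an order-reversing bijection
$$R_w\;\longmapsto\;\min\Gamma_w$$
from ${\cal R}_{\sigma}\cap {\cal R}^{\rm pr}$ onto exactly this chain of fixed points of $\sigma_{\Gamma}$. Since the codomain is empty by the previous step, the domain ${\cal R}_{\sigma}\cap {\cal R}^{\rm pr}$ must be empty as well, which is the conclusion.

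In short, the statement is a two-line corollary: Remark \ref{strict left shift} kills the fixed-point set and Corollary \ref{intersection} transports this emptiness back to ${\cal R}_{\sigma}\cap {\cal R}^{\rm pr}$. There is no genuine obstacle; the only point worth a sentence of commentary is that the hypothesis is used solely through its translation to $\sigma_{\Gamma}$, which is why the rest of the valuation-theoretic machinery plays no role in the argument.
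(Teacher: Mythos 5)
Your proof is correct and is exactly the argument the paper intends: Corollary \ref{omega} is placed immediately after Remark \ref{strict left shift} precisely so that it follows by combining that remark (no fixed points of $\sigma_{\Gamma}$) with the bijection of Corollary \ref{intersection}. Nothing to add.
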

\mn Recall that the {\bf Hahn group} \cite{[Ha]} over the chain $\Gamma$ and
components $\R$, denoted ${\bf H}_{\Gamma} \R$, is the totally
ordered abelian group whose elements are formal sums $\>g:= \sum
g_{\gamma} 1_{\gamma}\>$, with  well-ordered $\mbox{ support } g:
=\{\gamma\>;\> g_{\gamma}\not= 0\}\>.$ Here $g_{\gamma}\in \R$ and $1_{\gamma}$ denotes the characteristic function on the singleton $\{\gamma\}$.
 Addition is pointwise and the order lexicographic. Similarly, given a field $F$,
 the field of {\bf generalized power series} over the ordered abelian group $G$ (or {\bf Hahn field} over $G$) with coefficients in $F$,
 denoted $\mathbb{F}:=F((G))\>$, is the field whose elements are formal series $\>s:= \sum s_{g}
t^g\>$, with well-ordered $\mbox{ support } s: =\{g\>;\> s_{g}\not=
0\}\>.$ Addition is pointwise, multiplication is given by the usual
convolution formula. The field $\mathbb{F}$ has the same characteristic as that of $F$. The canonical valuation $v_{\min}$ on $\mathbb{F}$ is defined by $v_{\min}(s) : = \min \mbox{ support } s$ for $s \not= 0$. Its value group is $G$ and its residue field is $F$. Thus  ($\mathbb{F}, \preceq _{v_{\min}}$ ) is a q.o. field.
 If $F$ is an ordered field, its order extends to the lexicographic order on $\mathbb{F}$: a series $s$ is positive if and only if the coefficient of $t^{v_{\min}(s)}$ is positive in $F$.  Thus, in that case $(\mathbb{F}, \leq)$ is an ordered field.
Hahn fields are maximally valued: they admit no proper immediate extension, that is, no proper valued field extension preserving the value group and the residue field. They were extensively studied e.g. by Hahn \cite{[Ha]} and in the seminal paper of Kaplansky \cite{[KA]}.

\begin{lemma}\label{lifting}
Any order preserving automorphism $\sigma_{\Gamma}$ of the chain
$\Gamma$ lifts to an order preserving automorphism $\sigma_G$ of the
Hahn group $G$ over $\Gamma$, and $\sigma_G$ lifts in turn to a q.o.
preserving automorphism $\sigma$ of the Hahn field over $G$.
\end{lemma}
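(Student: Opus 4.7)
The plan is to lift by the natural ``reindex the support by $\sigma_\Gamma$ (respectively by $\sigma_G$)'' construction. Concretely, for $g = \sum_{\gamma} g_\gamma 1_\gamma \in {\bf H}_{\Gamma}\R$, I set
\[
\sigma_G(g) := \sum_{\gamma} g_\gamma 1_{\sigma_\Gamma(\gamma)},
\]
and for $s = \sum_{g} s_g t^g \in \R((G))$, I set
\[
\sigma(s) := \sum_{g} s_g t^{\sigma_G(g)}.
\]
In each case the inverse map is obtained by applying the same recipe to $\sigma_\Gamma^{-1}$ (respectively $\sigma_G^{-1}$), so bijectivity will be automatic once well-definedness is verified.

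First I would check well-definedness: the support of $\sigma_G(g)$ is precisely $\sigma_\Gamma(\mathrm{supp}\, g)$, and since $\sigma_\Gamma$ is an order isomorphism of $\Gamma$ it sends well-ordered subsets to well-ordered subsets, so $\sigma_G(g) \in {\bf H}_{\Gamma}\R$. The analogous statement for $\sigma$ follows once $\sigma_G$ is known to be order-preserving on $G$. Additivity of $\sigma_G$ and $\sigma$ is then immediate from the pointwise definition of addition. Multiplicativity of $\sigma$ is the only computation requiring a line of work: the coefficient of $t^{\sigma_G(h)}$ in $\sigma(s\cdot s')$ equals $\sum_{g+g'=h} s_g s'_{g'}$, while the same coefficient in $\sigma(s)\cdot\sigma(s')$ is, by convolution, $\sum_{\gamma_1+\gamma_2=\sigma_G(h)} \sigma(s)_{\gamma_1}\sigma(s')_{\gamma_2}$; substituting $\gamma_i=\sigma_G(g_i)$ and using that $\sigma_G$ is a group automorphism of $G$ shows the two sums agree.

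Finally, order-preservation is a lexicographic check: in ${\bf H}_{\Gamma}\R$ one has $g>0$ if and only if the coefficient at $\min \mathrm{supp}\, g$ is positive, and because $\sigma_\Gamma$ is order-preserving we have $\min \mathrm{supp}\, \sigma_G(g) = \sigma_\Gamma(\min \mathrm{supp}\, g)$ with the same real coefficient carried over. The identical argument with $\sigma_G$ replacing $\sigma_\Gamma$ handles $\sigma$. This very computation also delivers the identities $v_G \circ \sigma_G = \sigma_\Gamma \circ v_G$ and the analogous one for the valuation on $\R((G))$, so the constructed maps really are lifts in the sense of the commutative diagram of Section~\ref{diffanal}. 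There is no serious obstacle here; the substantive point is only that reindexing by an order-preserving bijection automatically preserves well-ordered supports, and that the convolution product on $\R((G))$ intertwines with any additive automorphism of the exponent group $G$.
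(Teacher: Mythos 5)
Your construction is exactly the one the paper uses (the paper's proof consists solely of the two defining formulas $\sigma_G(\sum g_\gamma 1_\gamma):=\sum g_\gamma 1_{\sigma_\Gamma(\gamma)}$ and $\sigma(\sum s_g t^g):=\sum s_g t^{\sigma_G(g)}$, leaving all verifications implicit), and your checks of well-definedness, additivity, multiplicativity, order-preservation, and the intertwining identities are correct and fill in precisely what the paper omits. No issues.
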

\begin{proof}
Set $\>\sigma_G(\sum g_{\gamma} 1_{\gamma}):= \sum g_{\gamma}
1_{\sigma_{\Gamma}(\gamma)}\>$. It is straightforward to verify that the thus defined $\>\sigma_G$ induces the given automorphism $\sigma_{\Gamma}$ on $\Gamma$. Thus $\>\sigma_G$ is a lifting of  $\sigma_{\Gamma}$. Now set $\>\sigma(\sum s_{g} t^g):= \sum s_{g} t^{\sigma_{G}(g)}\>.$ Again, it is clear that $\sigma$ induces $\>\sigma_G$ on $G$.
Thus $\sigma$ is a lifting of  $\>\sigma_G$ as asserted.
\end{proof}
\begin{corollary} \label{firstconstruction}
Given any order type $\tau$ there exists an
 ordered difference field $(K, \leq, \sigma)$,  and also a p.q.o. difference field $(K, \preceq, \sigma)$ such that the
order type of ${\cal R}_{\sigma}\cap {\cal R}^{\rm pr}$ is $\tau$.
\end{corollary}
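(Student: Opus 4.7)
The plan is to reduce everything to a construction at the level of the chain $\Gamma$ by invoking Corollary \ref{intersection}. That corollary identifies ${\cal R}_{\sigma}\cap {\cal R}^{\rm pr}$ (up to order reversal) with the chain of fixed points of $\sigma_{\Gamma}$ on $\Gamma$. Because the reverse $\tau^{*}$ of an arbitrary order type $\tau$ is again an arbitrary order type, it suffices, for any prescribed order type $\rho$, to produce a chain $\Gamma$ equipped with an order preserving automorphism $\sigma_{\Gamma}$ whose fixed point set has order type $\rho$; taking $\rho = \tau^{*}$ then yields ${\cal R}_{\sigma}\cap {\cal R}^{\rm pr}$ of order type $\tau$.

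For the chain construction, I would let $T$ be a linear order of type $\rho$ and set
$$\Gamma \;:=\; T \;\sqcup\; (T \times \Z),$$
ordered so that $T$ retains its original order, $T \times \Z$ is ordered lexicographically, and in mixed comparisons $t < (t', n)$ iff $t \leq t'$ (equivalently, each $t \in T$ sits immediately below its own block $\{t\} \times \Z$, which in turn lies strictly below every $t'' > t$ of $T$). Declare $\sigma_{\Gamma}$ to be the identity on $T$ and the shift $(t, n) \mapsto (t, n+1)$ on each $\Z$-block. A direct verification shows that $\sigma_{\Gamma}$ is an order preserving automorphism of $\Gamma$ whose fixed point set is exactly $T$, of order type $\rho$. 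In the degenerate case $\rho = \emptyset$ one instead takes $\Gamma = \Z$ with $\sigma_{\Gamma}(n) = n+1$, which has no fixed points and still gives a non-trivial automorphism.

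To assemble the final object, apply Lemma \ref{lifting} to obtain an order preserving automorphism $\sigma_G$ of $G = {\bf H}_{\Gamma}\R$ and then an order preserving automorphism $\sigma$ of the Hahn field $K = \R((G))$. The field $K$ is non-archimedean and maximally valued by the standard theory of Hahn series; its natural valuation has value group $G$ and value set canonically $\Gamma$. Because $\sigma_G$ and $\sigma$ are defined term-by-term from $\sigma_{\Gamma}$, the automorphisms they induce on $G$ and $\Gamma$ via the commutative diagram preceding Lemma \ref{wconvsigma} coincide with the original $\sigma_G$ and $\sigma_{\Gamma}$ respectively. Thus $(K,<,\sigma)$ is a non-trivial ordered difference field in which $\sigma_{\Gamma}$ has fixed point chain of order type $\rho = \tau^{*}$, and Corollary \ref{intersection} delivers ${\cal R}_{\sigma} \cap {\cal R}^{\rm pr}$ of order type $\tau$.

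The only substantive point is arranging the chain $\Gamma$ so that the prescribed order type is realized as the full set of fixed points of an order preserving automorphism; once that is done the remaining steps are essentially bookkeeping, relying only on Lemma \ref{lifting} and standard properties of Hahn fields.
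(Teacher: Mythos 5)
Your proof is correct and follows essentially the same route as the paper: reduce via Corollary \ref{intersection} to realizing $\tau^{*}$ as the fixed-point chain of an order automorphism of $\Gamma$, then lift through Lemma \ref{lifting} to $G={\bf H}_{\Gamma}\R$ and $K=\R((G))$. The only difference is cosmetic: the paper takes $\Gamma=\sum_{\tau^{*}}\Q^{\geq 0}$ with an automorphism fixing each $0$ and acting nontrivially on each copy of $\Q^{>0}$, whereas you interleave a copy of $(\Z,\,n\mapsto n+1)$ after each point of a chain of type $\tau^{*}$; both constructions serve the identical purpose, and your separate treatment of $\tau=\emptyset$ (to keep $\sigma$ non-trivial) makes explicit a point the paper glosses over.
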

\begin{proof}
Set $\mu: = \tau^*$, and consider e.g. the linear ordering
$\Gamma:=\sum _\mu \Q^{\geq 0}$, that is, the concatenation of $\mu$
copies of the non-negative rationals. Fix a non-trivial order
automorphism $\eta$ of $\Q^{>0}$. Define $\sigma _{\Gamma}$ to be
the uniquely defined order automorphism of $\Gamma$ fixing every
$0\in \Q^{\geq 0}$ in every copy and extending $\eta$  on
every copy.  It is clear that the order type of the chain of fixed points (the zeros in every copy) of $\sigma _{\Gamma}$ is  $\mu$.  Set e.g. $G:={\bf H}_{\Gamma} \R$. By Lemma \ref{lifting},
$\sigma_{\Gamma}$ lifts canonically to $\sigma_G$ on $G$. Now consider
e.g. the ordered field $\mathbb{F}:= \R((G))$. Again by Lemma \ref{lifting}, $\sigma_G$ lifts canonically to an order
automorphism $\sigma$ of $\mathbb{F}$. This is our required $\sigma$, by Corollary  \ref{intersection}. To obtain a p.q.o difference field, take $F$ any field and the corresponding $(\mathbb{F}, \preceq _{v_{\min}}, \sigma)$.
\end{proof}
\sn In the next section, we will exploit appropriate equivalence
relations to define the principal difference rank and construct
difference fields of arbitrary principal difference rank.
\section{The $\sigma$-rank and principal $\sigma$-rank via equivalence relations} \label{principalsigmarank}
Let $(K, \preceq, \sigma)$  be a q.o. difference field.  As promised in Section \ref{partIIOSMT}, we now exploit Remark \ref{mm} to give an
interpretation of the $\sigma$- rank and define the principal  $\sigma$-rank as quotients via
appropriate equivalence relations.
Our aim is to state and prove the analogues to Theorems
\ref{theorem3OSMT}, \ref{theorem1OSMT} and \ref{theorem2OSMT}.
We recall that the q.o. preserving maps considered in Remark \ref{mm} are assumed to be oriented.
Moreover, scrutinizing the proof of Theorem \ref{theorem3OSMT} we
quickly realize that we need Lemma \ref{corsasigma} below, an analogue of Lemma
\ref{corsavarphi}. Thus we need further assumptions on $\sigma$, to ensure that $\sigma$ satisfies Lemma  \ref{corsasigma}. For simplicity from now on we will assume
that  $\sigma$ or $\sigma^{-1}$ satisfy $\sigma(a) \succeq a^2$ for all $a\in {\bf P}_K$. Note that this implies that $\sigma(a) \succ a$, so $\sigma$ is an oriented  strict right-shift.  Note that
our condition on $\sigma$ is fulfilled for $\omega$-increasing or $\omega$-contracting
automorphisms. \mn A convex subring $K_w \ne K_v$ is
\slind{$\sigma$-principal generated by $a$} for $a \in \mbox{\bf P}
_K$ if $K_w$ is the smallest convex $\sigma$-compatible subring
containing $a$. The {\bf $\sigma$-principal rank} of $K$ is the
subset \gloss{${\cal R}_{\sigma}^{\rm pr}$} of ${\cal R}_{\sigma}$
consisting of all $\sigma$-principal $K_w\in {\cal R}$.  We will use the analogue of Remark \ref{mmc}:
\begin{remark} \label{mmcsigma}
The maps
$\sigma$, $\sigma_G$ and $\sigma_{\Gamma}$ are q.o. preserving and
we can define the corresponding equivalence relations
$\sim_{\sigma}$, $\sim_{\sigma_G}$ and $\sim_{\sigma_{\Gamma}}$. As
before we have
\begin{equation} \label{reduction}
a\sim_{\sigma} a' \mbox{ if and only if } v(a)\sim_{\sigma_{G}}v(a')
\mbox{ if and only if } v_G (v(a))\sim_{\sigma_{\Gamma}} v_G
(v(a'))
\end{equation}
Thus we have an order reversing bijection from ${\bf
P}_K/\sim_{\sigma}$ onto $\Gamma/\sim_{\sigma_{\Gamma}}$. Thus the
chain $[{\bf P}_K/\sim_{\sigma}]^{\rm is}$ of initial segments of
${\bf P}_K/\sim_{\sigma}$ ordered by inclusion is isomorphic to
$(\Gamma/\sim_{\sigma_{\Gamma}}) ^{\rm fs}$. As before, the subchain of initial segments which have a last element is isomorphic to $(\Gamma/\sim_{\sigma_{\Gamma}}) ^*$.
\end{remark}
\begin{lemma}                               \label{corsasigma}
The equivalence classes of
$\sim_{\sigma}$ are closed under $\sigma$ and under multiplication.
\end{lemma}
\begin{proof}
The condition on $\sigma$ implies by
induction that $\sigma^n(a) \succeq a^{2^n}$. Thus given $n\in\N_0\>,$
there exists $l\in \N_0$ such that $\sigma^l(a) \succeq a^n$. Thus $a\sim_{\sigma}\sigma (a)$. So the
 equivalence classes of $\sigma$ are closed under  $\sigma$ .
 Recall that the natural valuation $v_G$ on $G$ satisfies $v_G(x+y)
 = \min\{v_G(x), v_G(y)\}$ if sign($x$) = sign($y$).
Again one easily deduces from this fact and the equivalences (\ref{reduction})  above that the
 equivalence classes of $\sigma$ are closed under multiplication. Indeed assume that  $a\sim_{\sigma} b$ and  $a\sim_{\sigma} c$. We want to show that  $a\sim_{\sigma} bc$. Set $x: = v(b)$, $y: = v(c)$ and $z: = v(a) \in G ^{<0}\>$.
By the first equivalence in (\ref{reduction}), it is enough to show that  $v(a)\sim_{\sigma_{G}}v(bc)$ i.e. that $x + y \sim_{\sigma_{G}} z$. By the second equivalence  in (\ref{reduction}),  it is enough to show that $ v_G (x + y)\sim_{\sigma_{\Gamma}} v_G (z)\>$. Without loss of generality $ v_G (x + y) = v_G(x)$. But since $a\sim_{\sigma} b$ it follows by (\ref{reduction}) that  $ v_G (x)\sim_{\sigma_{\Gamma}} v_G (z)$ as required.
\end{proof}
We can now prove the analogue of Theorem \ref{theorem3OSMT}:
\begin{theorem}\label{theorem3OSMTsigma}
The $\sigma$-rank ${\cal R_{\sigma}}$ is isomorphic to $[{\bf
P}_K/\sim_{\sigma}]^{\rm is}$ and the principal $\sigma$-rank ${\cal
R}_{\sigma}^{\rm pr}$ is isomorphic to the subset of $[{\bf
P}_K/\sim_{\sigma}]^{\rm is}$ of initial segments which have a last
element.\footnote{Note that the subchain of $[{\bf P}_K/\sim_{\sigma}]^{\rm
is}$ of initial segments which have a last element is isomorphic to $[{\bf P}_K/\sim_{\sigma}]$ itself.}
\end{theorem}
\begin{proof}
First we note that if $K_w$ is a convex $\sigma$-compatible
valuation ring, then clearly $K_w^{\succ 0}\setminus K_v^{\succ 0}$ is an
initial segment of ${\bf P}_K$. Furthermore, if $K_w$ intersects a
$\sigma$- equivalence class $[a]_{\sim_{\sigma}}$ then it must
contain it, since the sequence $\sigma(a)^n; n\in \N_0$ is cofinal
in $[a]_{\sim_{\sigma}}$ and $K_w$ is a convex subring. We conclude
that $(K_w^{\succ 0}\setminus K_v^{\succ 0})/{\sim}_{\sigma}$ is an initial
segment of ${\bf P}_K/{\sim}_{\sigma}$ and moreover $[a]_{\sim
_{\sigma}}$ is the last class in case $K_w$ is $\sigma$- principal
generated by $a$. Conversely set ${\cal I}_w\>=\> \{[a]_{\sigma}\mid
a\in K_w^{\succ 0}\setminus K_v^{\succ 0}\}\>.$ Given ${\cal I}\in [{\bf P}_K/
{\sim}_{\sigma}]^{\rm is}$, we show that there is a
$\sigma$-compatible convex valuation ring $K_w$ such that ${\cal
I}_w = {\cal I}$. Given ${\cal I}$, let $(\bigcup {\cal I})$ denote
the set theoretic union of the elements of ${\cal I}$ and $-(\bigcup
{\cal I})$ the set of additive inverses. Set $K_w=-\left(\bigcup
{\cal I}\right)\cup K_v \cup \left(\bigcup {\cal I}\right)\>.$ We
claim that $K_w$ is the required ring. Clearly, ${\cal I}_w = {\cal
I}$. Further $K_w$ is convex (by its construction), and strictly
contains $K_v$. We leave it to the reader, using Lemma
\ref{corsasigma}, to verify that $K_w$ is a $\sigma$-compatible
subring, and that $K_w$ is $\sigma$-principal generated by $a$ if
$[a]_{\sim {\sigma}}$ is the last element of ${\cal I}$.
\end{proof}
We now deduce from this theorem combined with Remark \ref{mmcsigma} the promised analogues of Theorems  \ref{theorem1OSMT} and \ref{theorem2OSMT} respectively:
\begin{corollary} \label{theorem2OSMTsigma}
${\cal R}_{\sigma}$ is (isomorphic to) $(\Gamma/\sim
_{\sigma_{\Gamma}}) ^{\rm fs}$.
\end{corollary}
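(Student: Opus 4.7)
The plan is to obtain the corollary by composing two isomorphisms that have already been set up in the excerpt, so that no genuinely new argument is needed.

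First, I would invoke Theorem \ref{theorem3OSMTsigma}, which identifies ${\cal R}_{\sigma}$ with the chain $[{\bf P}_K/\sim_{\sigma}]^{\rm is}$ of initial segments of ${\bf P}_K/\sim_{\sigma}$ ordered by inclusion. Second, I would recall the order reversing bijection $[a]_{\sim_{\sigma}} \mapsto [v_G(v(a))]_{\sim_{\sigma_{\Gamma}}}$ from ${\bf P}_K/\sim_{\sigma}$ onto $\Gamma/\sim_{\sigma_{\Gamma}}$ established just before Lemma \ref{corsasigma}. The point is that any order reversing bijection between two chains induces an order preserving bijection between the chain of initial segments of the first and the chain of final segments of the second, simply by sending an initial segment $\mathcal I$ to the final segment consisting of the images of the elements in $\mathcal I$. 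Applying this observation yields an isomorphism
\[
[{\bf P}_K/\sim_{\sigma}]^{\rm is} \;\cong\; [\Gamma/\sim_{\sigma_{\Gamma}}]^{\rm fs} \;=\; (\Gamma/\sim_{\sigma_{\Gamma}})^{\rm fs}.
\]

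Finally, I would compose the two displayed isomorphisms to conclude ${\cal R}_{\sigma} \cong (\Gamma/\sim_{\sigma_{\Gamma}})^{\rm fs}$, which is the content of the corollary.

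The only point that requires any care, and which I would expect to be the main (albeit very mild) obstacle, is verifying that the induced correspondence on (initial/final) segments is indeed a well-defined order isomorphism: one should check that under an order reversing bijection between chains $X$ and $Y$, the image of an initial segment of $X$ is a final segment of $Y$, that this assignment is a bijection onto the set of all final segments, and that containment is preserved (not reversed) by this assignment. All three are routine consequences of the definitions of initial and final segment and of order reversing bijection, so I would spell them out briefly rather than grind through them. No extra properties of $\sigma$ beyond those already used in Theorem \ref{theorem3OSMTsigma} are needed.
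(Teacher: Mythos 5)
Your proposal is correct and follows exactly the route the paper intends: the corollary is stated without proof precisely because it is the composition of Theorem \ref{theorem3OSMTsigma} with the isomorphism $[{\bf P}_K/\sim_{\sigma}]^{\rm is}\cong[\Gamma/\sim_{\sigma_{\Gamma}}]^{\rm fs}$ already recorded just before Lemma \ref{corsasigma}. Your remark that an order reversing bijection of chains induces an order preserving bijection from initial segments to final segments is the same (routine) observation the paper relies on implicitly.
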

\begin{corollary} \label{theorem1OSMTsigma}
${\cal R_{\sigma} ^{\rm pr}}$ is (isomorphic to) $(\Gamma/\sim
_{\sigma_{\Gamma}}) ^*$.
\end{corollary}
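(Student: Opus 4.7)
The plan is to derive Corollary \ref{theorem1OSMTsigma} from Theorem \ref{theorem3OSMTsigma} by exactly the same transport of structure that, in the classical (multiplicative) setting, moves one from Theorem \ref{theorem3OSMT} through Lemma \ref{vsr} to Theorem \ref{theorem2OSMT}. The essential ingredient is already in place: the order reversing bijection
\[
\Phi\colon {\bf P}_K/{\sim_{\sigma}}\;\longrightarrow\;\Gamma/{\sim_{\sigma_{\Gamma}}},\qquad [a]_{\sim_{\sigma}}\mapsto [v_G(v(a))]_{\sim_{\sigma_{\Gamma}}},
\]
recorded just before Theorem \ref{theorem3OSMTsigma}.

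First, I would invoke Theorem \ref{theorem3OSMTsigma} to identify ${\cal R}_{\sigma}^{\rm pr}$ with the subset $\mathcal S$ of $[{\bf P}_K/{\sim_{\sigma}}]^{\rm is}$ of initial segments having a last element. Next, $I\mapsto\Phi(I)$ sends each initial segment of ${\bf P}_K/{\sim_{\sigma}}$ to a final segment of $\Gamma/{\sim_{\sigma_{\Gamma}}}$, and since $\Phi$ reverses order, a last element of $I$ becomes the least element of $\Phi(I)$; hence $\Phi$ restricts to a bijection between $\mathcal S$ and the set $\mathcal P$ of principal final segments of $\Gamma/{\sim_{\sigma_{\Gamma}}}$. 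Inclusion is preserved under this restriction ($I\subseteq I'$ iff $\Phi(I)\subseteq\Phi(I')$), so we obtain an order isomorphism $\mathcal S\cong\mathcal P$. Finally, the manifest analogue of Lemma \ref{vsr} applied to the quotient chain gives that
\[
[\gamma]_{\sim_{\sigma_{\Gamma}}}\;\longmapsto\;\{[\gamma']_{\sim_{\sigma_{\Gamma}}}\mid [\gamma']\geq[\gamma]\}
\]
is an order reversing bijection from $\Gamma/{\sim_{\sigma_{\Gamma}}}$ onto $\mathcal P$. Composing the three isomorphisms yields ${\cal R}_{\sigma}^{\rm pr}\cong\mathcal S\cong\mathcal P\cong(\Gamma/{\sim_{\sigma_{\Gamma}}})^*$; the composite of one order preserving and two order reversing bijections is order preserving, as asserted.

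No step is a real obstacle: the substantive content was already settled in Theorem \ref{theorem3OSMTsigma} via Lemma \ref{corsasigma}. The only things to watch are the bookkeeping on orientations (exactly as in the transition from Theorem \ref{theorem3OSMT} to Theorem \ref{theorem2OSMT}) and the observation that the analogue of Lemma \ref{vsr} is legitimately available for the quotient $\Gamma/{\sim_{\sigma_{\Gamma}}}$, because the convexity of the $\sim_{\sigma_{\Gamma}}$-classes (recorded in Remark \ref{mm}) guarantees that the induced ordering on this quotient is a genuine linear ordering, so the proof of Lemma \ref{vsr} transfers verbatim.
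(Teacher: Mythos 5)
Your proposal is correct and follows exactly the route the paper intends: Corollary \ref{theorem1OSMTsigma} is meant to fall out of Theorem \ref{theorem3OSMTsigma} together with the order reversing bijection ${\bf P}_K/{\sim_\sigma}\to\Gamma/{\sim_{\sigma_\Gamma}}$ recorded just before it, with the analogue of Lemma \ref{vsr} identifying the principal final segments of the quotient chain with $(\Gamma/{\sim_{\sigma_\Gamma}})^*$, precisely mirroring the passage from Theorem \ref{theorem3OSMT} to Theorem \ref{theorem2OSMT}. Your bookkeeping of which maps preserve and which reverse order is sound, and the appeal to convexity of the $\sim_{\sigma_\Gamma}$-classes (Remark \ref{mm}) to justify the quotient ordering is exactly the right point to check.
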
 \mn
We call the order type of  $(\Gamma/\sim
_{\sigma_{\Gamma}})$ the {\bf rank} of the automorphism $\sigma_{\Gamma}$ .
We now can construct $\omega$-increasing automorphisms of arbitrary principal
difference rank. Corollary \ref{arbitrary} below, compared to Corollary \ref{omega} demonstrates
the discrepancy between the chains ${\cal  R_{\sigma} ^{\rm pr}}$ and ${\cal R}_{\sigma}\cap {\cal
R}^{\rm pr}$.
\begin{corollary}\label{arbitrary}
Given any order type $\tau$ there exists a maximally valued ordered
field endowed with an $\omega$-increasing automorphism of principal
difference rank $\tau$.
\end{corollary}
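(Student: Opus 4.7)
The plan is to mimic the construction of Corollary \ref{firstconstruction}, but to replace the chain $\Q^{\geq 0}$ with a chain on which $\sigma_\Gamma$ restricts to a \emph{strict} left shift; by Remark \ref{strict left shift} this guarantees that the lifted automorphism is $\omega$-increasing (and in particular satisfies the standing hypothesis $\sigma(a) \geq a^2$ of Section \ref{principalsigmarank}). The target principal $\sigma$-rank being $\tau$, Corollary \ref{theorem1OSMTsigma} tells us to arrange $\Gamma/\sim_{\sigma_\Gamma}$ to have order type $\mu := \tau^*$, since $(\Gamma/\sim_{\sigma_\Gamma})^* = \mu^* = \tau$.

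Concretely, I would set $\Gamma := \sum_\mu \Z$, the lexicographic concatenation of $\mu$ copies of the integers, and define $\sigma_\Gamma$ to act on each copy by the shift $n \mapsto n-1$. This map is order preserving (each copy is stabilised, and the order between copies is preserved) and a strict left shift. I would then verify that the $\sim_{\sigma_\Gamma}$-classes are precisely the individual copies of $\Z$: within a copy, $\sigma_\Gamma^{|n-m|}$ brings $n$ below $m$ and vice versa, giving equivalence; across copies, since each copy is $\sigma_\Gamma$-invariant no finite iterate can move an element past its copy's boundary, so elements of distinct copies are never equivalent. Hence $\Gamma/\sim_{\sigma_\Gamma}$ has order type $\mu$.

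Next I would apply Lemma \ref{lifting} to lift $\sigma_\Gamma$ canonically to an order preserving automorphism $\sigma_G$ of $G := \pH_\Gamma \R$ and in turn to an order preserving automorphism $\sigma$ of the Hahn field $K := \R((G))$. Being a Hahn field, $K$ is maximally valued. Since $\sigma_\Gamma$ is a strict left shift, Remark \ref{strict left shift} gives that $\sigma$ is $\omega$-increasing, and Corollary \ref{theorem1OSMTsigma} then identifies ${\cal R}_{\sigma}^{\rm pr}$ with $(\Gamma/\sim_{\sigma_\Gamma})^* = \tau$, as required.

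There is no substantive obstacle here; the whole argument is a bookkeeping exercise combining the lifting lemma with the identification from Corollary \ref{theorem1OSMTsigma}. The one point that deserves care is the choice of the building block for $\Gamma$: one needs a chain admitting an order preserving strict left shift whose single orbit is cofinal \emph{and} coinitial in it, so that the whole chain forms one equivalence class. The additive shift on $\Z$ has exactly this property, which is why $\sum_\mu \Z$ does the job while the $\sum_\mu \Q^{\geq 0}$ of Corollary \ref{firstconstruction} (designed to produce fixed points) does not.
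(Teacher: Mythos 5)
Your proposal is correct and follows essentially the same route as the paper: the paper also sets $\mu=\tau^*$, takes $\Gamma=\sum_\mu\Q$ with $\sigma_\Gamma$ acting as translation by $-1$ on each copy (so each copy is one $\sim_{\sigma_\Gamma}$-class), and lifts via Lemma \ref{lifting} to $G={\bf H}_\Gamma\R$ and $K=\R((G))$; your substitution of $\Z$ for $\Q$ as the building block is immaterial. Your explicit verification that the classes are exactly the copies, and the appeal to Remark \ref{strict left shift} and Corollary \ref{theorem1OSMTsigma}, just spell out what the paper leaves implicit.
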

\begin{proof}
Set $\mu: = \tau^*$, and consider e.g. the linear ordering
$\Gamma:=\sum _\mu \Q$, that is, the concatenation of $\mu$ copies
of the non-negative rationals. Let $\ell$ be e.g. translation by $-1$ on $\Q$.
Define $\sigma _{\Gamma}$ to be the uniquely defined order
automorphism of $\Gamma$ extending $\ell$ on every copy. It is clearly a
strict left shift of rank $\mu$. Set e.g. $G:={\bf H}_{\Gamma} \R$. Then by Lemma \ref{lifting}
$\sigma_{\Gamma}$ lifts canonically to $\sigma_G$ on $G$. Now set
e.g. $K:= \R((G))$. By Lemma \ref{lifting}, Remark \ref{strict left shift} and Corollary  \ref{theorem1OSMTsigma}, $\sigma_G$ lifts canonically to an $\omega$-increasing
automorphism of $K$  of principal difference rank $\mu^*=\tau$.
\end{proof}
\begin{example} \label{k-s}
 Consider the chain $\Gamma= \Z \times \Z$ (the lexicographic product of two copies of $\Z$ ). We endow $\Gamma$  with the automorphisms  $\tau((x,
y)):=(x-1, y)$ and
$\sigma((x, y)):=(x, y-1)$. The rank of   $\tau$ is one and that of $\sigma$ is $\Z$. Both are strict left shifts. Lifting those automorphisms to  $G:={\bf H}_{\Gamma} \R$ and then to  $K:= \R((G))$ as in the proof of Corollary \ref{arbitrary}, we obtain $\omega$-increasing automorphisms of $K$ of distinct principal difference ranks.
\end{example}

For a regular uncountable cardinal ${\kappa}$,  let us denote by $G_{\kappa}$ the $\kappa$-bounded Hahn group, that is, the subgroup of $G={\bf H}_{\Gamma} \R$ consisting of elements with support of cardinality $< {\kappa}$.
Similarly,  we denote by $\R((G))_{\kappa}$ the $\kappa$-bounded Hahn field, i.e. the subfield of $K= \R((G))$  consisting of series with support of cardinality $< {\kappa}$. If $\kappa = \kappa^{< \kappa}$ then $\R((G_{\kappa}))_{\kappa}$ has cardinality $\kappa$, see \cite{[A-K]}.

We now generalize Example \ref{k-s}. In \cite[ Corollary 14]{[K-S]}, we construct for every  infinite cardinal $\kappa$ a chain $\Gamma$ of cardinality  $\kappa$ which admits of family of $2^{\kappa}$ strict left shift automorphisms, of pairwise distinct ranks.  Lifting those automorphisms  to $\R((G_{\kappa}))_{\kappa}$, we conclude as in \cite[ Theorem 9]{[K-S]}:
\begin{theorem} \label{last}
 Let  $\kappa = \kappa^{< \kappa}$ be a regular uncountable cardinal and $\Gamma$
be any chain of cardinality $\kappa$
which admits a family of  $2^{\kappa}$ strict left shift automorphisms of pairwise distinct ranks. Then the corresponding $\kappa$-bounded Hahn field  $\R((G_{\kappa}))_{\kappa}$ of cardinality $\kappa$ admits a family of  $2^{\kappa}$
$\omega$-increasing automorphisms of distinct principal difference ranks.
\end{theorem}


\end{document}